\theoremstyle{definition}
\newtheorem{definition}{Definition}
\theoremstyle{plain}
\newtheorem{proposition}[definition]{Proposition}
\newtheorem{lemma}[definition]{Lemma}
\newtheorem{theorem}[definition]{Theorem}
\newtheorem{corollary}[definition]{Corollary}
\newcommand*{\claimproofname}{Proof}
\def\B{{\mbox {\boldmath $B$}}}
\def\matrix0{{\mbox {\boldmath $O$}}}
\def\a{{\mbox{\boldmath $a$}}}
\def\e{{\mbox{\boldmath $e$}}}
\def\w{{\mbox{\boldmath $w$}}}
\def\x{{\mbox{\boldmath $x$}}}
\def\y{{\mbox{\boldmath $y$}}}
\def\z{{\mbox{\boldmath $z$}}}
\def\j{{\mbox{\boldmath $1$}}}
\def\vec0{\mbox{\bf 0}}
\def\vecx{{\mbox{\boldmath $x$}}}
\newcommand\tran{\mkern-2mu\raise1.25ex\hbox{$\scriptscriptstyle\top$}\mkern-3.5mu}
\def\Ba{{\mathcal{B}}}
\def\Sa{{\mathcal{S}}}
\def\Wa{{\mathcal{W}}}
\def\d{\mathop{\rm dist }\nolimits}
\def\diam{\mathop{\rm diam }\nolimits}
\renewcommand{\diam}{\mathrm{diam}}
\def\g{\mathop{\rm g }\nolimits}
\renewcommand{\d}{\mathrm{d}}
\DeclareRobustCommand{\VAN}[3]{#2} 
\definecolor{border}{RGB}{100,120,170}
\definecolor{inner}{RGB}{240,245,255}
\colorlet{inner}{gray!20}
\colorlet{inner2}{gray!80}
\colorlet{border}{black!80}
\let\c@figure\c@table
\let\ftype@figure\ftype@table
\let\ext@figure\ext@table
\newcommand{\gauss}[2]{\genfrac{[}{]}{0pt}{}{#1}{#2}\@ifnextchar\bgroup{_}{}}
\title{On the diameter and zero forcing number of some\\ graph classes in the Johnson, Grassmann and Hamming association scheme}
\author{Aida Abiad\thanks{\texttt{a.abiad.monge@tue.nl}, Department of Mathematics and Computer Science, Eindhoven University of Technology, The Netherlands\newline Department of Mathematics: Analysis, Logic and Discrete Mathematics, Ghent University, Belgium\newline Department of Mathematics and Data Science of Vrije Universiteit Brussel, Belgium} \qquad Robin Simoens\thanks{\texttt{Robin.Simoens@ugent.be},  Department of Mathematics: Analysis, Logic and Discrete Mathematics, Ghent University, Belgium} \qquad  Sjanne Zeijlemaker\thanks{\texttt{s.zeijlemaker@tue.nl},  Department of Mathematics and Computer Science, Eindhoven University of Technology, The Netherlands}}
\date{}
\begin{document}
\maketitle

\begin{abstract}
We determine the diameter of generalized Grassmann graphs and the zero forcing number of some generalized Johnson graphs, generalized Grassmann graphs and the Hamming graphs. Our work extends several previously known results.
\end{abstract}

\section{Introduction}

Graph classes in the Johnson, Grassmann and Hamming association scheme have received a considerable amount of attention over the last decades, and several graph properties and parameters have been investigated for these families. Examples are the work of Alspach, who showed that the Johnson graphs are Hamilton connected~\cite{alspach2013johnson}, the results of Bailey and Meagher, who studied the metric dimension of Grassmann graphs~\cite{meagher2012metric}, and the coloring results on generalized Kneser graphs by Balogh, Cherkashin and Kiselev~\cite{balogh2019coloring}. However, many parameters of these graphs are still unknown. In this paper, we focus on studying their diameter and zero forcing number.

Zero forcing is a propagation process on a graph where the vertices are initially partitioned into two sets of black and white vertices. A white vertex is colored black (\emph{forced}) if it is the unique white neighbor of a black vertex. The minimum number of initial black vertices needed to force all vertices of a graph~$G$ is called the \emph{zero forcing number}. Zero forcing was formally introduced in the AIM workshop~\cite{AIM} as an upper bound for the minimum rank, a connection which was also established by Alon~\cite{A}. The study of this parameter is motivated by the investigation of quantum networks,
influence in social networks and power dominating sets, see for example~\cite{burgarth2009local} and~\cite{dean2011power}.

In this paper, we calculate the girth and diameter of generalized Grassmann graphs, extending the known results for Kneser graphs~\cite{valencia2005diameter} and generalized Johnson graphs~\cite{agong2018girth} by  Valencia-Pabon and Vera, and Agong, Amarra, Caughman, Herman and Terada respectively. Recently, these results have also been complemented by Caughman, Herman and Terada \cite{CHT2023}, who studied the odd girth of generalized Johnson graphs. Moreover, we determine the zero forcing number of some generalized Johnson and generalized Grassmann graphs. As a corollary of our work, we obtain the known zero forcing results for Johnson graphs on~$2$-sets by Fallat, Meagher, Soltani and Yang.~\cite{ZplusJohnson} and for Kneser graphs by Bre{\u{s}}ar, Kos, and Torres~\cite{brevsar2019grundy}. Aazami posed a conjecture on the zero forcing of hypercubes \cite[Conjecture 2.4.4]{aazami2008hardness} which was proved by Alon \cite{A} and, independently, by several authors in a AIM workshop \cite{AIM}. Here we determine the zero forcing number of Hamming graphs, extending the mentioned known results.

\section{Preliminaries}
\label{sec:prelim}

Let~$G=(V,E)$ be a graph with vertex set~$V$ and edge set~$E$. Throughout this paper, we consider simple graphs, i.e.\ undirected, loopless graphs without multiple edges. Adjacency of vertices~$v$ and~$w$ will be denoted by~$v\sim w$ and the open and closed
neighborhood of a vertex~$v$ by~$G(v)$ and~$G[v]$ respectively. The \textit{induced subgraph}~$G[S]$ on a subset~$S\subseteq V$ is the graph with vertices~$S$ and edges~$\{e\in E\mid e\subseteq S\}$. The \textit{Cartesian product} of two graphs~$G$ and~$H$, denoted by~$G\square H$, is the graph on~$V(G)\times V(H)$ such that~$(u, v)$ is adjacent to~$(u',v')$ if and only if~$u = v$ and~$u'$ is adjacent to~$v'$ in~$H$, or~$u' = v'$ and~$u$ is adjacent to~$v$ in~$G$. If graphs $G$ and $H$ are isomorphic, we denote this by $G\cong H$. 

The \textit{distance} $\d(v,w)$ between two vertices~$v$ and~$w$ is the length of a shortest walk from~$v$ to~$w$. The \textit{diameter}~$\diam{(G)}$ is the largest possible distance between two vertices of~$G$. The \textit{girth}~$\g(G)$ is the length of a shortest cycle.

Let~$n$,~$k$ be positive integers with~$n\ge k$ and let~$S\subseteq\{0,1,\dots,k-1\}$. The \emph{generalized Johnson graph}~$J_S(n,k)$ has as vertices the~$k$-subsets of $\{1,2,\dots,n\}=:[n]$. Two vertices are adjacent if their intersection size is in~$S$. Well-known subfamilies include the \emph{Kneser graph} $K(n,k)=J_{\{0\}}(n,k)$ and the \emph{Johnson graph} $J(n,k)=J_{\{k-1\}}(n,k)$. Note that if~$n\le 2k$, the Kneser graph is either empty or a disjoint set of edges, hence we assume that~$n\ge 2k+1$ for these graphs.

The generalized Johnson graphs have the following~$q$-analogue. Let~$\mathbb{F}_q$ be the finite field of order~$q$, where $q$ is a prime power. A subspace of $\mathbb{F}_q$ of dimension~$k$ is called a \emph{$k$-subspace} for short. If we let~$S\subseteq\{0,1,\dots,k-1\}$, the \emph{generalized Grassmann graph~$J_{q,S}(n,k)$} has as vertices the~$k$-subspaces of~$\mathbb{F}_q^n$ and vertices are adjacent if the dimension of their intersection is in~$S$. Particular examples include the \emph{q-Kneser graph}~\(K_q(n,k)=J_{q,\{0\}}(n,k)\) and the \emph{Grassmann graph}~\(J_q(n,k)=J_{q,\{k-1\}}(n,k)\). The number of vertices of $J_{q,S}(n,k)$ is the Gaussian binomial coefficient
\[\gauss{n}{k}{q} \coloneqq \frac{(1-q^n)(1-q^{n-1})\dots (1-q^{n-r+1})}{(1-q)(1-q^2)\dots(1-q^k)}.\]

Note that we may assume~$n\ge 2k$ since~$J_S(n,k)\cong J_{\{s+n-2k\mid s\in S\}}(n,n-k)$, and similarly,~\(J_{q,S}(n,k)\cong J_{q,\{s+n-2k\mid s\in S\}}(n,n-k)\).

In Johnson graphs, vertices are associated with sets and adjacency is determined by the cardinality of their intersection. If we replace sets by ordered tuples, we obtain the well-known Hamming graphs~$H(n,q)$, where~$n$ denotes the tuple length and the entries range from~$0$ to~$q-1$. Tuples are connected by an edge whenever they coincide in~$n-1$ coordinates, i.e., when their Hamming distance is $1$. Alternatively, one may view~$H(n,q)$ as the  Cartesian product of~$n$ copies of~$K_q$. 

\textit{Zero forcing} is a graph coloring problem in which an initial subset of vertices is
colored black, while the others are colored white. A black vertex~$v$ colors a white neighbor~$w$ black if~$G(v)\setminus w$ is entirely black, i.e.\ if~$w$ is the unique white neighbor of~$v$.  A black vertex which initiates a color change is called a \textit{pivot} and we refer to the initial set of black vertices as the \textit{leader set}. If all white vertices can be colored black under the given coloring rule, we call the leader set a \textit{zero forcing set}. The \textit{zero forcing number} of a graph~$G=(V,E)$, denoted by~$Z(G)$, is the cardinality of a smallest subset of~$V$ which is a zero forcing set.

By modifying the color changing rule or adding restrictions to the zero forcing set, several interesting variations on zero forcing can be obtained. Below, we list the ones that are relevant for the results in this article. A more complete overview of these parameters and their mutual relations can be found in~\cite{barioli2013parameters,IEPbook}.

The \textit{connected zero forcing number}~$Z_c(G)$ is the cardinality of the smallest zero forcing set~$S$ such that~$G[S]$ is connected~\cite{BRIMKOV201731}. The \textit{total zero forcing number}~$Z_t(G)$ is the cardinality of the smallest zero forcing set~$S$ such that~$G[S]$ has no isolated vertices~\cite{totalZF}. Both~$Z_c(G)$ and~$Z_t(G)$ upper bound the zero forcing number. A single vertex can only be a zero forcing set if~$G$ is a path, hence if~$G \neq P_{|V|}$, we have~$2 \le Z(G) \leq Z_t(G) \leq Z_c(G)$.

For a graph~$G$, let~$\Sa^{\mathbb{F}}(G)$ be the set of symmetric matrices~$M$ over the field~$\mathbb{F}$ with~$m_{ij} = 0$ if and only if vertices~$i\neq j$ are not adjacent and arbitrary entries on the diagonal. The \textit{maximum nullity} of~$G$ over~$\mathbb{F}$, denoted by~$M^{\mathbb{F}}$, is the largest multiplicity of eigenvalue zero for any matrix in~$\Sa^{\mathbb{F}}(G)$. It was shown in~\cite{AIM} and~\cite{A} that the maximum nullity of a graph over any field lower bounds the zero forcing number.

\begin{lemma}[{\cite{AIM}, Proposition 2.4 and \cite{A}, Theorem 2.1}]
    For any graph~$G$ and field~$\mathbb{F}$, $M^{\mathbb{F}}(G) \le Z(G)$.
    \label{lem:maxnul}
\end{lemma}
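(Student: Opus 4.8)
The plan is to use the standard nullity--forcing duality argument. Fix a field~$\mathbb{F}$, choose a matrix~$M\in\Sa^{\mathbb{F}}(G)$ attaining the maximum nullity so that~$\dim\Ker M=M^{\mathbb{F}}(G)$, and let~$S$ be a zero forcing set of~$G$ with~$|S|=Z(G)$. It suffices to show that the coordinate restriction map~$\Ker M\to\mathbb{F}^{S}$, $x\mapsto(x_i)_{i\in S}$, is injective: this linear map then witnesses~$M^{\mathbb{F}}(G)=\dim\Ker M\le|S|=Z(G)$.

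So suppose~$x\in\Ker M$ with~$x_i=0$ for every~$i\in S$; I must deduce~$x=0$. Run the zero forcing process from the leader set~$S$ and let~$B_0=S\subseteq B_1\subseteq\cdots\subseteq B_t=V$ be the resulting chain of black sets, where each~$B_{\ell+1}$ arises from~$B_\ell$ by a single force. The key step is to prove, by induction on~$\ell$, that~$x$ vanishes on~$B_\ell$. The base case~$\ell=0$ is the hypothesis. For the inductive step, say the force turning~$B_\ell$ into~$B_{\ell+1}=B_\ell\cup\{w\}$ is performed by a pivot~$v\in B_\ell$ whose unique white neighbor is~$w$, so that~$G(v)\setminus\{w\}\subseteq B_\ell$. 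Reading off the~$v$-th coordinate of the equation~$Mx=0$ yields~$\sum_{j} m_{vj}x_j=0$, where the only possibly nonzero summands are those with~$j=v$ or~$j\in G(v)$; by the induction hypothesis~$x_j=0$ for all such~$j$ except possibly~$j=w$, so the equation collapses to~$m_{vw}x_w=0$. Since~$v\sim w$, the definition of~$\Sa^{\mathbb{F}}(G)$ gives~$m_{vw}\ne 0$, hence~$x_w=0$ and~$x$ vanishes on~$B_{\ell+1}$. Because~$S$ is a zero forcing set we have~$B_t=V$, so~$x=0$, proving injectivity.

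I do not expect a genuine obstacle here: the proof is short and purely linear-algebraic. The only point demanding care is the bookkeeping in the inductive step, namely that the color change rule as stated guarantees every neighbor of the pivot~$v$ other than~$w$ is already black at the moment of the force, which is precisely what makes the sum~$\sum_j m_{vj}x_j$ telescope down to the single term~$m_{vw}x_w$.
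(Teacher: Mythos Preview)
Your argument is correct and is precisely the standard proof appearing in the cited references. Note, however, that the paper itself does not prove this lemma: it is stated with attribution to \cite{AIM} and \cite{A} and used as a black box, so there is no in-paper proof to compare against.
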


Besides maximum nullity, zero forcing is closely related to other graph parameters, such as metric dimension~\cite{eroh2017comparison}, power domination~\cite{dean2011power} and Grundy domination~\cite{BRESAR2017}. A vertex~$v$ \textit{dominates}~$w$ if~$w\in G[v]$. A sequence~$(v_1,v_2,\dots,v_\ell)$ of vertices in~$V$ is called a \textit{dominating sequence} for~$G$ if each~$w\in V$ is dominated by some~$v_i$. If additionally for each~$i$ it holds that 
\[G[v_i]\setminus\bigcup_{j=1}^{i-1}G[v_j]\neq \emptyset,\]
we call~$(v_1,v_2,\dots,v_\ell)$ a \textit{Grundy dominating sequence} and if for each $i$ we have
\[G(v_i)\setminus\bigcup_{j=1}^{i-1}G[v_j]\neq \emptyset,\]
the sequence is called \textit{Z-Grundy dominating}. The vertex~$v_i$ is said to \textit{footprint} the vertices in~$G[v_i]\setminus\cup_{j=1}^{i-1}G[v_j]$ or $G(v_i)\setminus\cup_{j=1}^{i-1}G[v_j]$, respectively. In other words, a vertex~$w$ is footprinted by~$v_i$ if~$i$ is the smallest integer such that~$v_i$ dominates~$w$. Note that every vertex has a unique footprinter. The maximum length of a Grundy or Z-Grundy dominating sequence, denoted by~$\gamma_{gr}(G)$ or $\gamma_{gr}^Z(G)$ respectively, is referred to as the \textit{(Z-)Grundy domination number} of~$G$. 

Grundy domination is known to give a lower bound on the zero forcing number, a fact that we will use in some of our proofs. 

\begin{lemma}[{\cite[Corollary 2.3]{BRESAR2017}}]\label{lem:grundy}
Let~$G$ be a graph without isolated vertices, then
\begin{itemize}
    \item $Z(G) \ge |V| - \gamma_{gr}(G)$;
    \item $Z(G) = |V| - \gamma^Z_{gr}(G)$.
\end{itemize}
\end{lemma}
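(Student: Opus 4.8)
The plan is to prove the sharper equality in the second bullet, $Z(G)=|V|-\gamma_{gr}^Z(G)$, and deduce the first bullet for free: every Z-Grundy dominating sequence is automatically a (plain) Grundy dominating sequence, because $G(v_i)\subseteq G[v_i]$ forces $G(v_i)\setminus\bigcup_{j<i}G[v_j]\subseteq G[v_i]\setminus\bigcup_{j<i}G[v_j]$, so a nonempty left-hand side makes the right-hand side nonempty as well; hence $\gamma_{gr}(G)\ge\gamma_{gr}^Z(G)=|V|-Z(G)$, which is the first bullet. It thus remains to prove the two inequalities $\gamma_{gr}^Z(G)\le|V|-Z(G)$ and $\gamma_{gr}^Z(G)\ge|V|-Z(G)$.

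For $\gamma_{gr}^Z(G)\le|V|-Z(G)$, I would start from a longest Z-Grundy dominating sequence $(v_1,\dots,v_\ell)$ with $\ell=\gamma_{gr}^Z(G)$, and for each $i$ pick a vertex $f_i$ footprinted by $v_i$, i.e.\ $f_i\in G(v_i)\setminus\bigcup_{j<i}G[v_j]$. One checks quickly that the $f_i$ are pairwise distinct, that $f_i\neq v_j$ whenever $j\le i$, and that $f_j\notin G[v_i]$ whenever $j>i$; all of this is immediate from the footprinting condition (for distinctness: if $f_i=f_{i'}$ with $i<i'$ then $f_{i'}\in G[v_i]\subseteq\bigcup_{j<i'}G[v_j]$, a contradiction). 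The claim is then that $B:=V\setminus\{f_1,\dots,f_\ell\}$, which has size $|V|-\ell$, is a zero forcing set, witnessed by performing the forces $v_1\to f_1,\ v_2\to f_2,\ \dots,\ v_\ell\to f_\ell$ in this order: an induction shows that just before the $i$-th force the white vertices are exactly $\{f_i,\dots,f_\ell\}$, that $v_i$ is already black (either $v_i\in B$, or $v_i=f_j$ for some $j<i$ and hence forced at an earlier step), and that $f_i$ is its unique white neighbour (no $f_j$ with $j>i$ is adjacent to $v_i$, since $f_j\notin G[v_i]$). This yields $Z(G)\le|B|=|V|-\ell$.

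For the reverse inequality $\gamma_{gr}^Z(G)\ge|V|-Z(G)$, I would take a minimum zero forcing set $B$, run the forcing to completion, and record the chronological list of forces $u_1\to w_1,\dots,u_k\to w_k$, so that $\{w_1,\dots,w_k\}=V\setminus B$ and $k=|V|-Z(G)$ (each vertex performs at most one force, since after forcing its unique white neighbour it has none left). The key move is to read off the \emph{forced} vertices in reverse order, $S=(w_k,w_{k-1},\dots,w_1)$, and argue that in $S$ each $w_i$ footprints its pivot $u_i$: indeed $u_i\in G(w_i)$, and for every $j>i$ both $u_i\neq w_j$ and $u_i\not\sim w_j$ hold, because at the start of step $i$ the vertex $u_i$ is already black while $w_j$ is still white and $w_i$ is $u_i$'s only white neighbour. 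Thus $S$ satisfies the Z-Grundy footprinting condition at every position. It may fail to be a dominating sequence, but any vertex not yet dominated has a neighbour — here the hypothesis that $G$ has no isolated vertices is used — and appending such a neighbour to the end of $S$ dominates a new vertex while also footprinting it; iterating produces a genuine Z-Grundy dominating sequence of length at least $k$, so $\gamma_{gr}^Z(G)\ge|V|-Z(G)$.

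Combining the two inequalities gives $\gamma_{gr}^Z(G)=|V|-Z(G)$, hence $Z(G)=|V|-\gamma_{gr}^Z(G)$, and then the first bullet as explained. I expect the main obstacle to be the reverse inequality, and within it the correct choice of combinatorial object: one must reverse the list of \emph{forced} vertices (not of the pivots) and track the precise black/white status at each moment to verify the footprints — reversing the pivots instead fails already on $P_3$. Once that is in place, the passage to a dominating sequence via the no-isolated-vertex assumption is routine.
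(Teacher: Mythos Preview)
The paper does not prove this lemma at all: it is quoted verbatim as Corollary~2.3 of Bre\u{s}ar et~al.~\cite{BRESAR2017} and used as a black box, so there is no ``paper's own proof'' to compare against.

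Your argument is correct and is essentially the standard one. The forward direction (a longest Z-Grundy sequence yields a zero forcing set on the complement of its footprints) is clean; your checks that the $f_i$ are distinct, that $v_i$ is black in time, and that $f_i$ is its unique white neighbour are exactly what is needed. For the reverse direction, reversing the list of \emph{forced} vertices (not the pivots) is indeed the right move, and your verification that $u_i\notin G[w_j]$ for $j>i$ via the black/white status at step~$i$ is accurate. The final patching step---extending $S$ to an honest dominating sequence by repeatedly appending a neighbour of an as-yet-undominated vertex---is where the no-isolated-vertices hypothesis enters, and your use of it is sound: the appended vertex $x$ footprints the undominated $y\in G(x)$, so the Z-Grundy condition persists. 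The deduction of the first bullet from $\gamma_{gr}(G)\ge\gamma_{gr}^Z(G)$ is immediate.
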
 

For an overview of Grundy domination variants and their relation to zero forcing, we refer to~\cite{BRESAR2017}.

\section{Diameter and girth of generalized Grassmann graphs}
\label{sec:diameter}

We extend the results of \cite{agong2018girth} on the girth and diameter of Johnson graphs to generalized Grassmann graphs. Our main tool will be the following lemma.

\begin{lemma}[\cite{bose1966characterization}]\label{lemma:sparse}
    Let \(n\geq k+m\). Given at most \(q^{n-k-m+1}\) \(k\)-spaces in \(\mathbb{F}_q^n\), we can always find an \(m\)-space that intersects them trivially.
\end{lemma}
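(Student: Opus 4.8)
The plan is to build the required $m$-space greedily, one dimension at a time, keeping trivial intersection with every member of the family throughout. Write $\mathcal{K}$ for the given collection of at most $q^{n-k-m+1}$ $k$-spaces, and suppose inductively that we have produced a subspace $W_i$ of dimension $i < m$ with $W_i \cap K = \{0\}$ for every $K \in \mathcal{K}$ (for $i=0$ take $W_0 = \{0\}$, which satisfies this vacuously). I then want to find a vector $v$ such that $W_{i+1} \coloneqq W_i + \langle v \rangle$ has dimension $i+1$ and still meets every $K$ trivially.

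The crucial step is to identify precisely the set of \emph{bad} vectors $v$. Since $W_i \cap K = \{0\}$, any nonzero element of $W_{i+1}$ that lies in $K$ can be rescaled to the form $w + v$ with $w \in W_i$ --- the coefficient of $v$ cannot be zero, as otherwise the element would lie in $W_i \cap K = \{0\}$ --- and $w + v \in K$ is equivalent to $v \in K + W_i$. Consequently $W_{i+1} \cap K \neq \{0\}$ forces $v \in K + W_i$, so it suffices to choose $v \notin \bigcup_{K \in \mathcal{K}} (K + W_i)$; note that this automatically gives $v \notin W_i$ since $W_i \subseteq K + W_i$, so $\dim W_{i+1} = i+1$ as desired. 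Now each subspace $K + W_i$ has dimension $k+i$ (using $W_i \cap K = \{0\}$ once more) and all of them contain the common subspace $W_i$ of dimension $i$, so the union has at most $q^i + |\mathcal{K}|\,(q^{k+i} - q^i)$ elements.

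It remains to verify that this count is strictly smaller than $q^n$. Using $|\mathcal{K}| \le q^{n-k-m+1}$ and $i \le m-1$, we get $|\mathcal{K}|\,(q^{k+i}-q^i) = |\mathcal{K}|\,q^i(q^k-1) \le q^{n-k}(q^k-1) = q^n - q^{n-k}$, so the bad set has size at most $q^{m-1} + q^n - q^{n-k}$, which is less than $q^n$ exactly when $q^{m-1} < q^{n-k}$, i.e.\ when $n \ge k+m$ --- precisely the hypothesis. Hence a suitable $v$ exists at every stage, and iterating the step $m$ times yields the desired $m$-space. The only real content is this final inequality, in which the assumption $n \ge k+m$ is used tightly; everything else is routine bookkeeping about sums of subspaces.
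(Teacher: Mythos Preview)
Your proof is correct and follows essentially the same greedy approach as the paper: both build the $m$-space one basis vector at a time, at each step avoiding the union of the subspaces $K+W_i$ (in the paper's language, ``the span of $\mathbf{x}_1,\dots,\mathbf{x}_{j-1}$ and one of the given $k$-spaces''), and the decisive count $q^i + |\mathcal{K}|(q^{k+i}-q^i) \le q^{m-1} + q^n - q^{n-k} < q^n$ is the same in both arguments. Your write-up is slightly more explicit in justifying why $W_{i+1}\cap K \neq \{0\}$ forces $v\in K+W_i$ and why $\dim(K+W_i)=k+i$, but otherwise the two proofs coincide.
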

\begin{proof}
    Let \(a\leq q^{n-k-m+1}\) be the number of given \(k\)-spaces in \(\mathbb{F}_q^n\). We find a suitable \(m\)-space by constructing a basis \(\{\vecx_1,\vecx_2,\dots,\vecx_m\}\) for it. For the first basis vector, \(\vecx_1\), there are at least
    \[(q^n-1)-a(q^k-1)\]
    choices: the total number of nonzero vectors, minus those that lie in one (and possibly more) of the \(a\) given \(k\)-spaces.
    For \(\vecx_2\), there are at least
    \[(q^n-1)-(q-1)-a(q^{k+1}-q)\]
    choices: the total number of nonzero vectors, minus those that span the same \(1\)-space as \(\vecx_1\), minus those that lie in the span of \(\vecx_1\) and one of the given \(k\)-spaces, but are no multiple of \(\vecx_1\) (because we eliminated those already).
    Continuing in this way, we find a decreasing number of available vectors, ending with at least
    \[(q^n-1)-(q^{m-1}-1)-a(q^{k+m-1}-q^{m-1})\]
    choices for \(\vecx_m\). Since \(a\leq q^{n-k-m+1}\), this number is at least \(q^n-q^{m-1}-q^n+q^{n-k}=q^{n-k}-q^{m-1}\), which is strictly positive since \(k+m\leq n\). We conclude that such a space exists.
\end{proof}

In the statement below, we call $A$ a \textit{proper subset} of $B$, denoted by $A\subset B$, if $\emptyset\neq A \neq B$. 

\begin{theorem}\label{thm:diamgrassmann}
    Let \(n\geq2k\) and suppose that \(S \subset\{0,1,\dots,k-1\}\) with \(s\coloneqq \min(S)\). The generalized Grassmann graph \(J_{q,S}(n,k)\) has diameter 
    \[\diam\left(J_{q,S}(n,k)\right)=\left\{\begin{array}{ll}2&\text{if }s=0\\\left\lceil\frac{k}{k-s}\right\rceil&\text{if }s\neq0.\end{array}\right.\]
    \vspace{-3mm}
\end{theorem}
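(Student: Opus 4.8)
The plan is to treat the two cases $s=0$ and $s\neq 0$ separately, and in each case prove the upper and lower bounds on the diameter independently. The main tool throughout is Lemma~\ref{lemma:sparse}, which lets us build a ``pivot'' subspace intersecting a small family of $k$-spaces trivially. I will freely use the standard counting fact that the number of $k$-subspaces of $\mathbb{F}_q^n$ containing a fixed $j$-subspace, or contained in a fixed $j$-subspace, is again a Gaussian binomial coefficient, and hence is at most roughly $q^{k(n-k)}$-type bounds when I need the hypothesis of Lemma~\ref{lemma:sparse} to apply; the point will be that the relevant families are small enough.

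\textbf{Case $s=0$.} Here $0\in S$, so two $k$-spaces are adjacent whenever they meet trivially. Since $S\neq\{0,1,\dots,k-1\}$ (proper subset) and $J_{q,S}(n,k)$ is not complete, the diameter is at least $2$. For the upper bound, given any two $k$-spaces $U,W$, I want a common neighbour, i.e.\ a $k$-space $X$ meeting both trivially. Since $\dim(U+W)\le 2k\le n$, I apply Lemma~\ref{lemma:sparse} with $m=k$ to the two-element family $\{U,W\}$: we need $2\le q^{n-k-k+1}=q^{n-2k+1}$, which holds since $q\ge 2$ and $n\ge 2k$. This gives a $k$-space $X$ with $X\cap U=X\cap W=0$, so $d(U,W)\le 2$, and the diameter is exactly $2$.

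\textbf{Case $s\neq 0$.} Write $D=\lceil k/(k-s)\rceil$. For the \emph{upper bound}, the idea is that one step along an edge can decrease (or adjust) the dimension of the intersection with a fixed target by at most $k-s$: if $d(U,W)$ is large, take a shortest path and track $\dim(U_i\cap W)$; since $\dim U_0\cap W$ can be as small as... more carefully, I will show any two vertices $U,W$ are joined by a path of length $D$ by explicitly ``rotating'' $U$ onto $W$ in chunks. Starting from $U$, pick a chain $U=U_0, U_1,\dots$ where each $U_{i+1}$ is obtained by replacing a suitable $(k-s)$-dimensional piece of $U_i$ so that $\dim(U_i\cap U_{i+1})=s\in S$ (making consecutive spaces adjacent) while increasing $\dim(U_i\cap W)$ by $k-s$ each step, using Lemma~\ref{lemma:sparse} to find room for the new directions inside $W$ (and to keep intersections exactly right — this is where I must check $n\ge 2k$ gives enough space). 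After $\lceil k/(k-s)\rceil$ steps the running space is contained in, hence equal to, $W$. For the \emph{lower bound}, I exhibit two $k$-spaces $U,W$ at distance $D$: take $U\cap W$ as large as forced and note that along any path $U=U_0\sim U_1\sim\cdots\sim U_\ell=W$, consecutive adjacency forces $\dim(U_i\cap U_{i+1})\le k-1$, so $U_{i+1}$ differs from $U_i$ in at least $1$ dimension; but to reach $W$ we need the dimension of agreement to climb from a small value (choose $U,W$ disjoint, or as disjoint as $n$ allows) up to $k$, and each step can add at most $k-s$ new ``correct'' directions because $\dim(U_i\cap U_{i+1})\ge s$ means at most $k-s$ coordinates of $U_i$ are discarded per step — a counting/dimension inequality that yields $\ell\ge k/(k-s)$, hence $\ell\ge D$.

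\textbf{Main obstacle.} The delicate part is the upper bound in the case $s\neq 0$: I must produce, at each step, a $k$-space $U_{i+1}$ that is \emph{simultaneously} adjacent to $U_i$ (intersection dimension exactly in $S$, not merely $\ge s$ or $\le$ something) and makes measured progress toward $W$, and the final step must land exactly on $W$ with $\dim(U_{\ell-1}\cap W)\in S$ — the ``leftover'' $k - (D-1)(k-s)$ dimensions at the last step need not equal $s$, so I may need $S$ to contain the right value or, more likely, I only need $\min S = s$ together with the freedom to choose which element of $S$ to hit, and I must verify adjacency is still achievable. Getting the arithmetic of the last (partial) step right, and confirming that Lemma~\ref{lemma:sparse}'s hypothesis $a\le q^{n-k-m+1}$ is met for the (small, e.g.\ two- or three-element) families I feed it given only $n\ge 2k$, is the crux; everything else is bookkeeping on dimensions of sums and intersections.
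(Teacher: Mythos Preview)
Your plan is largely on track: the $s=0$ case is handled correctly (and matches the paper's argument), and your lower-bound argument for $s\neq 0$ --- tracking $\dim(U_i\cap W)$ along a shortest path and observing that $\dim(U_{i+1}\cap W)\le\dim(U_i\cap W)+(k-s)$ because $\dim(U_i\cap U_{i+1})\ge s$ --- is exactly the content of the paper's inductive inequality, just phrased more directly.

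The gap is in the upper bound for $s\neq 0$, and it is precisely the obstacle you flag but do not resolve. Your ``rotate in chunks of $k-s$'' plan forces $\dim(U_{D-1}\cap W)=(D-1)(k-s)$, and when $(k-s)\nmid k$ this is strictly larger than $s$; since the only element of $S$ you are given is $s=\min(S)$, you cannot guarantee $U_{D-1}\sim W$, and ``freedom to choose which element of $S$ to hit'' is illusory here. The paper's fix is to stop the rotation two steps early: after $D-2$ chunk moves one has $t':=\dim(U_{D-2}\cap W)\ge 2s-k$, and whenever $k+t'\ge 2s$ one can pick an $s$-space $\pi$ through $U_{D-2}\cap W$ inside $U_{D-2}$ and an $s$-space $\tau$ through $U_{D-2}\cap W$ inside $W$, then apply Lemma~\ref{lemma:sparse} in the quotient by $\pi+\tau$ to extend $\langle\pi,\tau\rangle$ to a $k$-space meeting both $U_{D-2}$ and $W$ in exactly an $s$-space. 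That single common neighbour closes the path in two more edges of intersection dimension exactly $s\in S$, giving total length $D$. Incorporate this two-step finish (and note that Lemma~\ref{lemma:sparse} is used in the residue, not ``inside $W$'') and your argument goes through.
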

\begin{proof}
    Let \(v\) and \(w\) be two arbitrary vertices with intersection dimension \(t\). We will show that
    \[\d(v,w)=\left\{\begin{array}{ll}\left\lceil\frac{k-t}{k-s}\right\rceil&\text{if }t<s\\1\text{ or }2&\text{if }t\geq s.\end{array}\right.\]

\begin{description}
\item[Case 1: \(t<s\).] Then in particular, \(v\not\sim w\). We must prove that \(\d(v,w)=d'\), where \(d'\coloneqq\left\lceil\frac{k-t}{k-s}\right\rceil\). We distinguish two cases, similarly to the proof of \cite[Lemma~3.2]{agong2018girth}.
\begin{description}
\item[Case 1(a): \(k+t\geq2s\).]  In this case, \(k-s<k-t\leq2(k-s)\), so \(d'=2\). It suffices to construct a \(k\)-space that intersects both \(v\) and \(w\) in an \(s\)-space. Choose an \(s\)-space \(\pi\) through \(v\cap w\) in \(v\) and an \(s\)-space \(\tau\) through \(v\cap w\) in \(w\). These two span a \((2s-t)\)-space, which we can extend to a \(k\)-space that intersects \(v\) and \(w\) in just this space, by applying Lemma~\ref{lemma:sparse} to its residue. Therefore, \(\d(v,w)=2=d'\).
\item[Case 1(b): \(k+t<2s\).] We first show that \(\d(v,w)\leq d'\) by constructing a walk of length \(d'\) from \(v\) to \(w\). Choose a basis \(\{\x_1,\x_2,\dots,\x_t\}\) of \(v\cap w\) and expand it to a basis \(\{\x_1,\x_2,\dots,\x_t,\y_1,\y_2,\dots,\y_{k-t}\}\) of \(v\) and to a basis \(\{\x_1,\x_2,\dots,\x_t,\z_1,\z_2,\dots,\z_{k-t}\}\) of \(w\). Define
    \[u_i\coloneqq\langle \x_1,\x_2,\dots,\x_t,\y_{i(k-s)+1},\y_{i(k-s)+2},\dots,\y_{k-t},\z_1,\z_2,\dots,\z_{i(k-s)}\rangle\]
    for \(i\in\{1,2,\dots,d'-2\}\). Then \((v,u_1,u_2,\dots,u_{d'-2})\) is a walk of length \(d'-2\). The dimension $t'$ of the intersection of \(u_{d'-2}\) and \(w\) is at least \(2s-k\), hence these vertices satisfy the condition $k+t'\ge 2s$ from Case 1a. This means that $d(u_{d'-2},w) = 2$, so we can extend the walk to \(w\) by only two more steps. This results in a walk of length \(d'\) between \(v\) and \(w\).
    
    \begin{figure}[H]
        \centering
        \begin{tikzpicture}[scale=2, remember picture]
    \draw[thick, border,fill=inner,rotate around={-50:(-.08,0)}] (-.08,1.2) ellipse (.5 and 1.4) {};
    \draw[thick, border,fill=inner!50,rotate around={-30:(-.04,0.02)}]
    (-.15,1.17) ellipse (.8 and 1.4) {};
    \draw[thick, border,fill=inner] (0,1.2) ellipse (.9 and 1.4) {};
    \draw[thick, border,fill=inner!50,rotate around={30:(.04,.02)}]
    (.15,1.17) ellipse (.8 and 1.4) {};
    \draw[thick, border,fill=inner,rotate around={50:(.08,0)}] (.08,1.2) ellipse (.5 and 1.4) {};
    \draw[border,rotate around={-50:(-.08,0)}] (-.08,1.2) ellipse (.5 and 1.4) {};
    \draw[border,rotate around={-30:(-.04,0.02)}]
    (-.15,1.17) ellipse (.8 and 1.4) {};
    \draw[border] (0,1.2) ellipse (.9 and 1.4) {};
    \draw[border,rotate around={30:(.04,.02)}]
    (.15,1.17) ellipse (.8 and 1.4) {};
    \draw[border,rotate around={50:(.08,0)}] (.08,1.2) ellipse (.5 and 1.4) {};
    \path[every node/.append style={circle, fill=black, minimum size=4pt, label distance=5pt, inner sep=0pt}]
    (-.1,.1) node {}
    (0.02,.3) node[label={[label distance=12pt]270:\small\(\dots\)}] {}
    (.1,.12) node {};
    \path (-2.1,1.6) node[label=\(v\)] {}
    (2.1,1.6) node[label=\(w\)] {}
    (-1.4,2.2) node[label=\(u_1\)] {}
    (1.4,2.2) node[label=\(u_{d'-2}\)] {}
    (0.25,2.4) node[label={[label distance=0pt]180:\(\dots\)}] {}
    (.04,-0.05) node[label={[label distance=3pt]270:\(\x_1,\x_2,...,\x_t\)}] {};
    \path[every node/.append style={circle, fill=black, minimum size=4pt, label distance=2pt, inner sep=0pt}]
    (-1.65,1.5) node[label={[label distance=0pt]150:\(\y_1\)}] {}
    (-1.45,.9) node[label={[label distance=-5pt]210:\(\y_{k-s}\)}] {}
    (1.65,1.5) node[label={[label distance=2pt]0:\(\z_{(d'-2)(k-s)+1}\)}] {}
    (1.6,1.3) node {}
    (1.4,.8) node[label={[label distance=0pt]-30:\(\z_{k-t}\)}] {}
    (-1.12,1.2) node[label={[label distance=-7pt]60:\(\y_{k-s+1}\)}] {}
    (-.95,.6) node[label={[label distance=-10pt]270:\(\y_{2(k-s)}\)}] {}
    (1.1,1.25) node {}
    (.95,.6) node[label={[label distance=-10pt]300:\(\z_{(d'-2)(k-s)}\)}] {}
    (-.3,.75) node {}
    (-.33,.6) node[label={[label distance=-2pt,rotate=-5]270:\(\vdots\)}] {}
    (-.37,.2) node {}
    (.3,.65) node[label={[label distance=-2pt]90:\(\z_1\)}] {}
    (.37,.2) node[label={[label distance=5pt,rotate=10]90:\(\vdots\)}][label={[label distance=1pt]-1:\(\z_{k-s}\)}] {};
    \path
    (-.7,.6) node[label=\(\ddots\)] {}
    (.7,.6) node[label=\reflectbox{\(\ddots\)}] {}
    (-1.57,1.28) node[label={[label distance=-14pt,rotate=20]270:\(\vdots\)}] {}
    (-1.1,1.1) node[label={[label distance=-7pt,rotate=15]270:\(\vdots\)}] {}
    (1.08,1.1) node[label={[label distance=-8pt,rotate=-15]270:\(\vdots\)}] {}
    (1.58,1.3) node[label={[label distance=-3pt,rotate=-20]270:\(\vdots\)}] {};
\end{tikzpicture}
        \caption{The walk \((v,u_1,u_2,\dots,u_{d'-2})\), drawn projectively.}
    \end{figure}
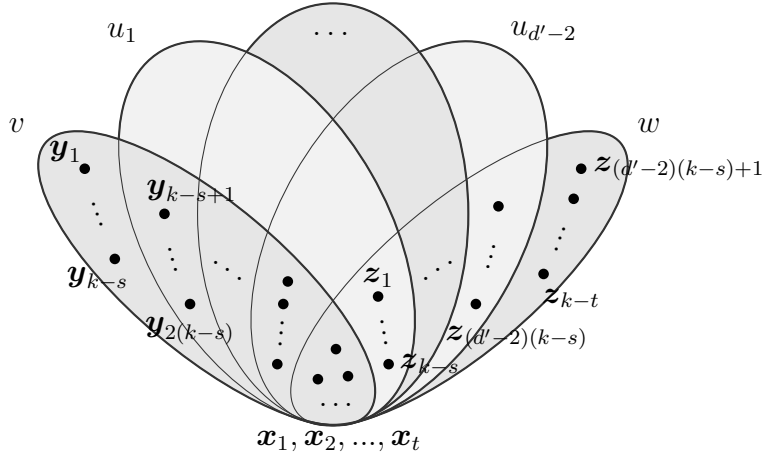
    
    In order to show the converse inequality \(\d(v,w)\geq d'\), it suffices to prove that \(\d(v,w)\geq\frac{k-t}{k-s}\), since \(\d(v,w)\) is an integer. We do this by induction on \(\d(v,w)\). If \(\d(v,w)=1\), then \(\d(v,w)\geq\frac{k-t}{k-s}\) simply because \(s\leq t\). For the induction step, consider a vertex \(u\) with \(\d(u,v)=\d(v,w)-1\) and \(u\sim w\). The induction hypothesis implies that \(\d(u,v)\geq\frac{k-\dim(u\cap v)}{k-s}\), or after rewriting, \((s-k)\cdot\d(v,w)-s+2k\leq\dim(u\cap v)\). Thus,
    \begin{align*}
        (s-k)\cdot\d(v,w)+2k&\leq\dim(u\cap v)+s\\
        &\leq\dim(u\cap v)+\dim(u\cap w)\\
        &=\dim(u\cap v\cap w)+\dim(u\cap\langle v,w\rangle)\\
        &\leq\dim(v\cap w)+\dim(u)\\
        &=t+k,
    \end{align*}
    where we used that \(u\sim w\) in the second step, and applied the Grassmann identity in the third step. We conclude that \(\d(v,w)\geq\frac{k-t}{k-s}\).
\end{description}
\item[Case 2: \(t\geq s\).] If \(t\geq s\), we can choose an \(s\)-space \(\pi\) in the intersection of \(v\) and \(w\) and construct a \(k\)-space \(u\) that intersects \(v\) and \(w\) in \(\pi\) using Lemma~\ref{lemma:sparse}, like we did in Case 1. This construction provides us with a walk of length \(2\), so the distance \(\d(v,w)\) is at most \(2\).
\end{description}

If \(s=0\), then we always have \(t\geq s\) and the result follows from Case 2. If \(s\neq0\), then \(\left\lceil\frac{k}{k-s}\right\rceil\) is greater than \(2\) and the result follows, because \(\left\lceil\frac{k-t}{k-s}\right\rceil\) is maximal for \(t=0\).
\end{proof}

Allowing \(n\) to be smaller than \(2k\) leads to the following, more general statement. Note that the condition \(n\geq 2k-\max(S)\) below is just  there to ensure connectivity.

\begin{corollary}\label{cor:diamgrassmann}
    Let \(S\) be a proper subset of \(\{0,1,\dots,k-1\}\) with \(s\coloneqq \min(S)\) such that \(n\geq2k-\max(S)\). The generalized Grassmann graph \(J_{q,S}(n,k)\) has diameter 
    \[\diam\left(J_{q,S}(n,k)\right)=\left\{\begin{array}{ll}2&\text{if }s\in\{0,2k-n\}\\\left\lceil\frac{\min(k,n-k)}{k-s}\right\rceil&\text{if }s\notin\{0,2k-n\}.\end{array}\right.\]
    \vspace{-3mm}
\end{corollary}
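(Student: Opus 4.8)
To establish the corollary, the plan is to reduce it to Theorem~\ref{thm:diamgrassmann} by means of the duality isomorphism $J_{q,S}(n,k)\cong J_{q,S^{\ast}}(n,n-k)$ with $S^{\ast}=\{t+n-2k\mid t\in S\}$ recorded in Section~\ref{sec:prelim}: when $n<2k$ this replaces the second parameter $k$ by the smaller value $n-k$, so that the new graph $J_{q,S^{\ast}}(n,n-k)$ satisfies $n\geq 2(n-k)$ and falls within the scope of Theorem~\ref{thm:diamgrassmann}.

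If $n\geq 2k$ there is nothing new to do: then $\min(k,n-k)=k$, and since $2k-n\leq 0\leq s$ the condition $s\in\{0,2k-n\}$ is equivalent to $s=0$, so the displayed formula is verbatim the conclusion of Theorem~\ref{thm:diamgrassmann} (which applies because $S$ is a proper subset of $\{0,1,\dots,k-1\}$).

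Now assume $k\leq n<2k$ and set $k'\coloneqq n-k$, so that $1\leq k'<k$, $n\geq 2k'$, and $\min(k,n-k)=k'$. By duality, $J_{q,S}(n,k)\cong J_{q,S^{\ast}}(n,k')$, and I would first clean up $S^{\ast}$. Since $n\geq 2k'$, any two distinct $k'$-subspaces of $\mathbb{F}_q^n$ intersect in a subspace whose dimension lies between $\max(0,2k'-n)=0$ and $k'-1$, so the adjacency of $J_{q,S^{\ast}}(n,k')$ depends only on $\overline{S}\coloneqq S^{\ast}\cap\{0,1,\dots,k'-1\}=\{t+n-2k\mid t\in S,\ t\geq 2k-n\}$; that is, $J_{q,S^{\ast}}(n,k')=J_{q,\overline{S}}(n,k')$. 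The hypothesis $n\geq 2k-\max(S)$ is precisely what makes $\overline{S}$ nonempty. Two sub-cases remain. If $\overline{S}=\{0,1,\dots,k'-1\}$ --- equivalently $S\supseteq\{2k-n,\dots,k-1\}$ --- then every pair of distinct vertices is adjacent, so the graph is complete, of diameter $1$. Otherwise $\overline{S}$ is a proper subset of $\{0,1,\dots,k'-1\}$, Theorem~\ref{thm:diamgrassmann} applies to $J_{q,\overline{S}}(n,k')$, and it gives diameter $2$ when $\min(\overline{S})=0$ and $\left\lceil k'/(k'-\min(\overline{S}))\right\rceil$ otherwise.

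The final step is to translate this outcome back to the parameters $s$, $k$, $n$. Put $\sigma\coloneqq\min\{t\in S\mid t\geq\max(0,2k-n)\}$; this equals $s$ whenever $n\geq 2k$, and more generally whenever $s\geq 2k-n$. Then $\min(\overline{S})=\sigma-(2k-n)$, so $\min(\overline{S})=0$ exactly when $2k-n\in S$, and otherwise $\left\lceil k'/(k'-\min(\overline{S}))\right\rceil=\left\lceil(n-k)/(k-\sigma)\right\rceil=\left\lceil\min(k,n-k)/(k-\sigma)\right\rceil$. I expect this bookkeeping to be the main obstacle: one must pin down $\sigma$ --- the smallest element of $S$ that can actually occur as an intersection dimension --- as the quantity controlling the diameter, and then check, also against the completeness sub-case and the borderline case $2k-n\in S$, that the outcome matches the formula as stated. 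All of the geometric work, by contrast, is already done by Theorem~\ref{thm:diamgrassmann} and Lemma~\ref{lemma:sparse}.
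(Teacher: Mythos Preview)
Your plan coincides with the paper's: for $n\geq 2k$ apply Theorem~\ref{thm:diamgrassmann} directly, and for $n<2k$ pass to the dual $J_{q,S^{\ast}}(n,n-k)$ and invoke Theorem~\ref{thm:diamgrassmann} there. The paper's Case~2 is two lines long: it simply writes $\min(S^{\ast})=s+n-2k$ and reads off the answer, without addressing either of the two issues you flag (negative entries in $S^{\ast}$, and the possibility that the truncated set $\overline{S}$ equals all of $\{0,\dots,k'-1\}$).

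You are right to be wary, and the final check you defer in fact \emph{fails}: the corollary is false as stated. Take $n=8$, $k=5$, $S=\{0,4\}$; the hypotheses hold and $s=0$, so the displayed formula gives diameter $2$, but any two $5$-spaces in $\mathbb{F}_q^{8}$ meet in dimension at least $2$, whence the graph is $J_{q,\{4\}}(8,5)\cong J_q(8,3)$, of diameter $3$. Your completeness sub-case is a separate genuine exception: with $n=7$, $k=5$, $S=\{3,4\}$ one has $s=2k-n=3$, the formula predicts diameter $2$, yet every pair of distinct $5$-spaces in $\mathbb{F}_q^{7}$ is adjacent and the diameter is $1$. In both examples the paper's argument breaks precisely where you anticipated: after dualising, either $S^{\ast}$ contains a negative number (so its minimum is not $s+n-2k$), or $\overline{S}=\{0,\dots,k'-1\}$ is not proper and Theorem~\ref{thm:diamgrassmann} does not apply.

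Your computation in terms of $\sigma$ is the correct one. For $n<2k$ the diameter is $1$ when $\{2k-n,\dots,k-1\}\subseteq S$, it is $2$ when $2k-n\in S$ but the graph is not complete, and it is $\left\lceil(n-k)/(k-\sigma)\right\rceil$ otherwise. The formula in the corollary is valid only under the unstated extra hypotheses (for $n<2k$) that $s\geq 2k-n$ and $S\neq\{2k-n,\dots,k-1\}$; equivalently, that $\sigma=s$ and the dual set $\overline{S}$ is still a proper subset.
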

\begin{proof}
    We distinguish two cases.
    \begin{description}
\item[Case 1: \(n\geq2k\).] We are done by Theorem \ref{thm:diamgrassmann}.
\item[Case 2: \(n<2k\).] By definition, \(J_{q,S}(n,k)\cong J_{q,\{s+n-2k\mid s\in S\}}(n,n-k)\). Applying the first case on the latter graph (since \(2(n-k)<n\) and \(S\subseteq\{0,1,\dots,n-k-1\}\)), results in the diameter being \(\left\lfloor\frac{n-k}{n-k-(s+n-2k)}\right\rfloor=\left\lfloor\frac{n-k}{k-s}\right\rfloor\) if \(s+n-2k\neq0\) and \(2\) if \(s+n-2k=0\).\qedhere
\end{description}
\end{proof}

We end this section with a short proof of the girth of generalized Grassmann graphs.

\begin{proposition}\label{thm:girthgrassmann}
    Every generalized Grassmann graph \(J_{q,S}(n,k)\) with \(S\neq\emptyset\) has girth \(3\).
\end{proposition}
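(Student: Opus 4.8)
The claim amounts to exhibiting a triangle in $J_{q,S}(n,k)$: since the graph is simple it can contain no shorter cycle, so it suffices to produce three pairwise-adjacent $k$-subspaces. Following the paper's standing assumption $n\ge 2k$, fix any $s\in S$ (say $s=\min S$); then $0\le s\le k-1$, so $k-s\ge 1$ and $2k-s\le 2k\le n$. My plan is to build all three vertices around a common $s$-subspace. Choose subspaces $U,P,Q$ of $\mathbb{F}_q^n$ with $\dim U=s$ and $\dim P=\dim Q=k-s$ whose sum $U\oplus P\oplus Q$ is direct (possible since $\dim(U\oplus P\oplus Q)=2k-s\le n$). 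Fix bases $p_1,\dots,p_{k-s}$ of $P$ and $q_1,\dots,q_{k-s}$ of $Q$, and let $R:=\langle p_1+q_1,\dots,p_{k-s}+q_{k-s}\rangle$ be the ``diagonal'' $(k-s)$-subspace of $P\oplus Q$. Finally set $A:=U\oplus P$, $B:=U\oplus Q$, $C:=U\oplus R$, each a $k$-subspace of $\mathbb{F}_q^n$.

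Next I would check that any two of $A,B,C$ meet in exactly $U$. Each pairwise sum is of the form $U+X+Y$ for distinct $X,Y\in\{P,Q,R\}$, so it suffices to show $U+X+Y$ always has dimension $s+2(k-s)=2k-s$; then $\dim(A'\cap B')=\dim A'+\dim B'-\dim(A'+B')$ gives $\dim(A\cap B)=k+k-(2k-s)=s$, and since $U\subseteq A\cap B$ with $\dim U=s$ we get $A\cap B=U$, and symmetrically for the other two pairs. For the pair $\{P,Q\}$ this is the defining property of $U\oplus P\oplus Q$. For a pair involving $R$, say $\{P,R\}$: if $\sum_i c_i(p_i+q_i)\in P$ then $\sum_i c_i q_i\in P\cap Q=0$, forcing all $c_i=0$, so $P\cap R=0$; and since $R\subseteq P\oplus Q$, the subspace $P+R$ lies inside $P\oplus Q$, which meets $U$ trivially, so $U+P+R$ has dimension $2k-s$. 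The pair $\{Q,R\}$ is symmetric. Hence $\dim(A\cap B)=\dim(A\cap C)=\dim(B\cap C)=s\in S$, so $A,B,C$ are pairwise adjacent, and they are pairwise distinct since $s\le k-1<k$. Thus $\{A,B,C\}$ is a triangle, and $\g(J_{q,S}(n,k))=3$.

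There is no deep obstacle here; the only point needing care is forcing the pairwise intersections to equal $U$ exactly rather than something larger. The naive attempt — taking $A,B,C$ with $A+B+C$ a direct sum — would require $n\ge 3(k-s)+s=3k-2s$, which is too strong precisely when $s$ is small (e.g.\ the $q$-Kneser case $S=\{0\}$ with $n=2k$). The diagonal subspace $R$ is the device that keeps every pairwise span inside a $(2k-s)$-dimensional subspace while still collapsing the pairwise intersections onto $U$, and verifying this reduces to the single dimension count above. (If one prefers not to invoke the $n\ge 2k$ convention, the same construction works for any $J_{q,S}(n,k)$ satisfying the connectivity bound $n\ge 2k-\max(S)$ by instead taking $s=\max(S)$.)
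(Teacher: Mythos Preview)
Your proof is correct and takes a genuinely different route from the paper's. The paper fixes two $k$-spaces $v,w$ meeting in an $s$-space $\pi$ and then invokes Lemma~\ref{lemma:sparse} (the Bose--Burton counting argument) in the quotient by $\pi$ to produce a third $k$-space $u$ meeting both $v$ and $w$ in exactly $\pi$; the inequality $2\le q^{(n-s)-2(k-s)+1}$ needed there is exactly the condition $n\ge 2k$. You instead build all three vertices explicitly inside a single $(2k-s)$-dimensional space: with $U\oplus P\oplus Q$ direct you take $A=U\oplus P$, $B=U\oplus Q$, and $C=U\oplus R$ where $R$ is the diagonal of $P\oplus Q$, and then a short dimension count forces each pairwise intersection onto $U$. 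Your argument is more elementary and self-contained (it avoids Lemma~\ref{lemma:sparse} entirely) and is fully constructive; the paper's version is terser once that lemma is in hand and reuses the same tool as in Theorem~\ref{thm:diamgrassmann}. Both constructions live inside a $(2k-s)$-dimensional ambient, so the dimensional hypothesis is effectively the same, and your closing remark about taking $s=\max(S)$ when only $n\ge 2k-\max(S)$ is assumed is a nice observation that the paper does not make.
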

\begin{proof}
    Let \(J_{q,S}(n,k)\) be a nontrivial Grassmann graph and let \(s\in S\). Recall that we may assume that \(n\geq2k\) without loss of generality. Choose two \(k\)-spaces \(v\) and \(w\) that intersect in an \(s\)-space \(\pi\). Since \(2\leq q\leq q^{n-2k+1}\leq q^{(n-s)-2(k-s)+1}\), we can apply Lemma~\ref{lemma:sparse} to the residual projective space of \(\pi\) to find a third \(k\)-space \(u\) that intersects \(v\) and \(w\) in \(\pi\). Then \(u\), \(v\) and \(w\) are mutually adjacent, i.e.\ there is a triangle in the graph. We conclude that the girth must be \(3\).
\end{proof}

\section{Zero forcing number}

Determining the zero forcing number is known to be an NP-complete problem. The proof of this result is often wrongly attributed to Aazami \cite{aazami2008hardness, A2010} (see, e.g. \cite{BRIMKOV2019,BRIMKOV201731, IEPbook,TD2015}), who showed the NP-completeness for weighted zero forcing. However, this is incorrect, since the unweighted version does not follow from the weighted $0,1$ version that Aazami established in \cite[Theorem 2.3.1]{aazami2008hardness}. The NP-completeness of the unweighted variant is proved in~\cite{YANG2013100}, where it is shown that fast-mixed searching is NP-complete, and it is known that this problem is equivalent to zero forcing~\cite{fallat2016complexity}.
Despite its NP-completeness, the exact zero forcing number is known for certain graph classes~\cite{AIM, brevsar2016dominating,BRIMKOV201731}. In this section, we determine the zero forcing number of several families of generalized Johnson graphs, generalized Grassmann graphs and Hamming graphs.

\subsection{Generalized Johnson graphs}\label{sec:genJohnson}

First, we investigate the zero forcing number of generalized Johnson graphs, generalizing known results on Johnson and Kneser graphs from~\cite{ZplusJohnson} and~\cite{brevsar2019grundy}. A natural way to generalize the Johnson graphs, is to extend their connection set~$\{k-1\}$ to~$S=\{s,s+1,\dots,k-1\}$. We determine the zero forcing number of these graphs by studying the more general case where~$\min(S)=s$. Note that if~$n<2k-s$, every two~$k$-sets intersect in strictly more than~$s$ elements, so not all options in~$S$ can actually occur. We therefore assume that~$n\ge 2k-s$. To prove a lower bound on the zero forcing number, we will make use of the following variant of a theorem of Bollob\'as.
\begin{lemma}[{\cite[Theorem 1 + Remark 3.2]{furedi84}}]
\label{lem:bollobas}
Let~$X_1,X_2,\dots, X_m$ be~$r$-element sets, let $Y_1,Y_2,\dots,Y_m$ be~$s$-element sets and let $t\ge 0$ such that
\begin{description}
    \item[$(i)$] $|X_i\cap Y_i| \le t$ for~$i\in [m]$;
    \item[$(ii)$] $|X_i\cap Y_j| \ge t+1$ for~$1\le i<j\le m$.
\end{description}
Then~$m\le \binom{r+s-2t}{r-t}$. 
\end{lemma}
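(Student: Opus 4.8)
The statement is a skew, $t$-intersecting version of the Bollob\'as set-pair inequality, and I would prove it by the exterior-algebra (general position) method, using the one-directional hypothesis~(ii) through a ``triangular'' linear-independence argument. First I would dispose of the degenerate situations: if $t\geq\min(r,s)$ then~(ii) cannot hold for any pair $i<j$, so $m\leq 1$ and there is nothing to prove; hence assume $m\geq 2$ and $t<\min(r,s)$, so that $r-t\geq 1$, $s-t\geq 1$, and $r,s\leq r+s-t$.

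For the core construction, set $D\coloneqq r+s-t$, let $U\coloneqq\bigcup_{i}(X_i\cup Y_i)$ be the ground set, and pick vectors $\{\phi(u):u\in U\}$ in $\mathbb{R}^{D}$ in general position (for instance on the moment curve), so that any $\leq D$ of them are linearly independent and any $\geq D$ of them span $\mathbb{R}^D$. Put $P_i\coloneqq\operatorname{span}\{\phi(u):u\in X_i\}$ and $Q_i\coloneqq\operatorname{span}\{\phi(u):u\in Y_i\}$; then $\dim P_i=r$, $\dim Q_i=s$, $\dim(P_i+Q_j)=\min(|X_i\cup Y_j|,D)$, and therefore
\[\dim(P_i\cap Q_j)=r+s-\min\bigl(r+s-|X_i\cap Y_j|,\,D\bigr)=\max\bigl(|X_i\cap Y_j|,\,t\bigr),\]
which by~(i) equals $t$ when $i=j$ and by~(ii) is at least $t+1$ when $i<j$. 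Now fix a subspace $H\leq\mathbb{R}^{D}$ of codimension $t$ generic with respect to these finitely many subspaces, so that $\dim(P_i\cap H)=r-t$, $\dim(Q_i\cap H)=s-t$, and $\dim(P_i\cap Q_j\cap H)=\max\bigl(\dim(P_i\cap Q_j)-t,\,0\bigr)$, which is $0$ when $i=j$ and at least $1$ when $i<j$. Since $\dim H=r+s-2t=(r-t)+(s-t)$, we may let $\mathbf{x}_i\in\Lambda^{r-t}(H)$ and $\mathbf{y}_i\in\Lambda^{s-t}(H)$ be the (nonzero, determined up to scalar) fundamental multivectors of $P_i\cap H$ and $Q_i\cap H$. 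Under the wedge pairing $\Lambda^{r-t}(H)\times\Lambda^{s-t}(H)\to\Lambda^{\dim H}(H)\cong\mathbb{R}$, a wedge of a basis of $P_i\cap H$ with a basis of $Q_j\cap H$ is nonzero precisely when $(P_i\cap H)\cap(Q_j\cap H)=0$; hence $\mathbf{x}_i\wedge\mathbf{y}_i\neq 0$ for all $i$ while $\mathbf{x}_i\wedge\mathbf{y}_j=0$ whenever $i<j$.

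The conclusion then follows from the triangular argument: if $\sum_{i=1}^m c_i\mathbf{x}_i=0$ and $j$ is the largest index with $c_j\neq 0$, then wedging with $\mathbf{y}_j$ annihilates every term with $i<j$ (by the vanishing above) and every term with $i>j$ (since $c_i=0$), leaving $c_j(\mathbf{x}_j\wedge\mathbf{y}_j)=0$, which is impossible. Thus $\mathbf{x}_1,\dots,\mathbf{x}_m$ are linearly independent in $\Lambda^{r-t}(H)$, giving $m\leq\dim\Lambda^{r-t}(H)=\binom{r+s-2t}{r-t}$. I expect the main obstacle to be the careful bookkeeping of the general-position claims --- in particular, verifying that a single codimension-$t$ subspace $H$ can be chosen so that it meets all of the (at most $m^{2}+2m$) relevant subspaces in exactly the expected dimension, together with the elementary but crucial fact that, when $\dim P+\dim Q=\dim H$, the wedge of a basis of $P$ with a basis of $Q$ vanishes iff $P\cap Q\neq 0$. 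This is essentially the content of F\"uredi's ``geometric'' treatment of the set-pair problem.
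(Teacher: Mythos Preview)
Your argument is correct and is essentially F\"uredi's exterior-algebra (``geometric'') proof of the skew, threshold-$t$ Bollob\'as inequality: place the ground set in general position in $\mathbb{R}^{r+s-t}$, cut by a generic codimension-$t$ flat, and use the triangular wedge pairing in $\Lambda^{r-t}(H)$. The bookkeeping you flag (existence of a single generic $H$ meeting finitely many subspaces transversally, and the nondegeneracy of the top-wedge pairing when the dimensions are complementary) is routine and goes through exactly as you indicate.

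There is, however, nothing to compare against in the paper itself: this lemma is quoted from \cite{furedi84} and is not reproved here, so the paper provides no alternative argument. Your proof is precisely the one the citation points to. One small caveat worth tightening in a write-up: your degenerate-case remark ``$m\le 1$ and there is nothing to prove'' tacitly assumes $\binom{r+s-2t}{r-t}\ge 1$; if $t>\min(r,s)$ the binomial is $0$ under the usual convention, so strictly speaking the lemma should be read with $0\le t\le\min(r,s)$, which is the only regime in which it is ever applied.
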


\begin{theorem}
Let~$S\subseteq \{0,1,\dots,k-1\}$ with~$s \coloneqq \min(S)$ and let~$n\ge 2k-s$. Then
\[Z(J_S(n,k)) \le Z_t(J_S(n,k)) \le Z_c(J_S(n,k)) \le \binom{n}{k} - \binom{n-2(k-s)}{s}.\]
If~$S = \{s,s+1,\dots,k-1\}$, equality holds throughout.
\label{th:genJohnson}
\end{theorem}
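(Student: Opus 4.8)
The display is a chain $Z\le Z_t\le Z_c\le\binom nk-\binom{n-2(k-s)}{s}$ together with a matching lower bound on $Z$ in the interval case. Since $Z\le Z_t\le Z_c$ holds for every graph that is not a path (Section~\ref{sec:prelim}), I would reduce the theorem to two independent tasks: (a) exhibit a \emph{connected} zero forcing set of $J_S(n,k)$ of size $\binom nk-\binom{n-2(k-s)}{s}$ for arbitrary $S$ with $\min S=s$; and (b) prove $Z(J_S(n,k))\ge\binom nk-\binom{n-2(k-s)}{s}$ when $S=\{s,s+1,\dots,k-1\}$.

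For (a) I would argue by an explicit construction. Put $m\coloneqq k-s$ (so $m\ge 1$, as $s\le k-1$) and fix a partition $[n]=R\sqcup P\sqcup Q$ with $|P|=|Q|=m$ and $|R|=n-2m=n-2(k-s)$; here $|R|\ge s$ is exactly the hypothesis $n\ge 2k-s$, so $s$-subsets of $R$ exist. Colour the $k$-sets $T\cup P$ with $T\in\binom Rs$ white and all other $k$-sets black; the white set has size $\binom{n-2(k-s)}{s}$, so the black set $W$ has the claimed size. To see that $W$ forces, note that each white vertex $T\cup P$ is adjacent to the black vertex $T\cup Q$ (they meet in the $s$-set $T$, and $s\in S$), and that $T\cup P$ is the only white neighbour of $T\cup Q$, since any other white vertex $T'\cup P$ meets $T\cup Q$ in $T'\cap T$, which has size at most $s-1$ and hence is not in $S$. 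Thus every white vertex has a permanently black forcing pivot, so $W$ is a zero forcing set.

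The delicate part of (a), and the step I expect to be the main obstacle, is showing that $J_S(n,k)[W]$ is connected, where $W=\{v:P\not\subseteq v\}\cup\{v:v\cap Q\neq\emptyset\}$. When $k-1\in S$ (in particular for the interval $S$), I would use single-element swaps: replacing one coordinate of a $k$-set produces a set meeting it in $k-1$ coordinates, hence adjacent to it. Fixing the hub $z_0\coloneqq Q\cup R_0$ with $R_0\in\binom Rs$ (note $z_0\in W$), one joins an arbitrary $v\in W$ to $z_0$ by a sequence of swaps that keeps fixed a chosen witness throughout — an element of $P\setminus v$ if $P\not\subseteq v$, or an element of $v\cap Q$ if $v\cap Q\neq\emptyset$ — so that every intermediate set stays in $W$; this gives a path in $J_S(n,k)[W]$. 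Hence $Z_c(J_S(n,k))\le\binom nk-\binom{n-2(k-s)}{s}$, and the same bound then holds for $Z$ and $Z_t$.

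For (b), assume $S=\{s,\dots,k-1\}$. The graph has no isolated vertex (since $n\ge 2k-s$ lets every $k$-set meet some other $k$-set in exactly $s$ points), so Lemma~\ref{lem:grundy} gives $Z(J_S(n,k))=\binom nk-\gamma^Z_{gr}(J_S(n,k))$, and it remains to bound $\gamma^Z_{gr}(J_S(n,k))\le\binom{n-2(k-s)}{s}$. Given a $Z$-Grundy dominating sequence $(v_1,\dots,v_\ell)$, for each $i$ pick a $k$-set $w_i$ footprinted by $v_i$; then $|v_i\cap w_i|\ge s$, while for $j<i$ the relations $w_i\neq v_j$ and $w_i\not\sim v_j$ together with the interval shape of $S$ give $|v_j\cap w_i|\le s-1$. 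I would then apply Lemma~\ref{lem:bollobas} to the $(n-k)$-sets $X_a\coloneqq[n]\setminus w_{\ell+1-a}$ and the $k$-sets $Y_a\coloneqq v_{\ell+1-a}$ with threshold $t\coloneqq k-s$: complementation turns the two intersection conditions into $|X_a\cap Y_a|=k-|v_{\ell+1-a}\cap w_{\ell+1-a}|\le t$ and, for $a<b$, $|X_a\cap Y_b|=k-|v_{\ell+1-b}\cap w_{\ell+1-a}|\ge t+1$, which are exactly hypotheses $(i)$ and $(ii)$. The lemma yields $\ell\le\binom{(n-k)+k-2(k-s)}{(n-k)-(k-s)}=\binom{n-2(k-s)}{s}$, the binomial coefficient being well defined because $(n-k)-(k-s)\ge 0$ is again $n\ge 2k-s$. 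Combining (a), (b) and the chain $Z\le Z_t\le Z_c$ forces equality throughout.
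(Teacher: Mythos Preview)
Your leader-set construction and the Bollob\'as lower bound via Lemma~\ref{lem:bollobas} are essentially the paper's proof; the only cosmetic difference in (b) is that the paper uses the ordinary Grundy number and takes $X_i=v_i$, $Y_i=[n]\setminus w_i$ directly rather than reversing indices, but the two encodings are equivalent.

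There is, however, a genuine gap in the connectivity step of (a). You show that $J_S(n,k)[W]$ is connected only under the extra hypothesis $k-1\in S$, by single-element swaps. But the theorem asserts $Z_c(J_S(n,k))\le\binom nk-\binom{n-2(k-s)}{s}$ for \emph{every} $S$ with $\min S=s$; when $k-1\notin S$, two $k$-sets differing in a single element are not adjacent, so your swap path does not lie in $J_S(n,k)$ at all, and your ``Hence $Z_c\le\cdots$'' does not follow for such $S$. The paper closes this gap using only the adjacency at intersection size $s$, which is always available since $s\in S$: to replace a single element $a\in v$ by $b\notin v$, choose a $(2k-s)$-set $X\supseteq v\cup\{b\}$ and a $(k-s)$-subset $Y\subseteq v$ with $a\in Y$; then $v\to X\setminus Y\to(v\setminus\{a\})\cup\{b\}$ is a length-$2$ path in which each step is an intersection-$s$ edge, and by choosing $Y$ and the extra elements of $X$ suitably one keeps the intermediate vertex $X\setminus Y$ black. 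Substituting this two-step swap for your one-step swap, your hub argument then goes through for arbitrary $S$.
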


\begin{proof}
Let~$\Wa \coloneqq \{v\in V \mid [k-s] \subset v,\ k-s+1,k-s+2,\dots,2(k-s) \notin v\}$ and~$\Ba\coloneqq V \setminus \Wa$. Note that~$\Ba$ is well-defined, since~$n\ge 2(k-s)$. This set has size~$\binom{n}{k} - \binom{n-2(k-s)}{s}$ and we will show that it is a zero forcing set.

Consider a white vertex~$v\coloneqq\{1,2,\dots,k-s,v_1,v_2,\dots,v_{s}\}\in \Wa$, with~$v_i \notin \{k-s+1,k-s+2,\dots,2(k-s)\}$. It is adjacent to~$w\coloneqq\{k-s+1,k-s+2,\dots,2(k-s),v_1,v_2,\dots,v_{s}\}\in \Ba$. Every other white vertex has at most~$s-1$ elements in common with~$w$, because their intersection must be a subset of~$\{v_1,v_2,\dots,v_s\}$. Therefore,~$v$ is the unique white neighbor of~$w$ and is forced black. This holds for any~$v\in \Wa$, so~$\Ba$ is a zero forcing set.

It is also a connected zero forcing set, since we can construct a black path between any two black vertices as follows.
Let~$v,v'\in \mathcal{B}$. For any choice of~$S$, vertices are adjacent if they differ in~$k-s$ elements. Using this property, we can replace one element~$a\in v$ at the time with a new element~$b$ to eventually arrive at~$v'$: choose a~$(2k-s)$-set~$X$ containing~$v\cup\{a,b\}$ and a subset~$Y\subset v$ such that~$|Y| = k-s$,~$a\in Y$. Then there exists a path~$v \rightarrow X\setminus Y \rightarrow (v\setminus a) \cup b$, where at each step, we replace exactly~$k-s$ elements. Moreover, by choosing~$Y$ appropriately we can ensure that all path vertices are black. Therefore, the leader set is connected.

Now assume that~$S=\{s,s+1,\dots,k-1\}$. To prove that the upper bound is tight, we use the Grundy domination number of~$J_S(n,k)$. Consider a Grundy dominating sequence~$v_1,v_2,\dots, v_m$ of maximum length and pair each~$v_i$ with a vertex~$w_i$ that is footprinted by it. Define the sets~$X_i = v_i$ and~$Y_i = [n] \setminus w_i$. Then~$|X_i\cap Y_i|\le k-s$. Moreover, if $j>i$, we know that~$|v_i\cap w_j| \le s - 1$, otherwise~$w_j$ is either~$v_i$ or one of its neighbors, so it would have been dominated by~$v_i$ before~$v_j$. Hence~$|X_i\cap Y_j| \ge k-s + 1$ for~$i<j$. Lemma~\ref{lem:bollobas} implies that~$\gamma_{gr}(J_S(n,k))\le \binom{n-2(k-s)}{s}$, hence
\[Z(J_S(n,k)) \ge \binom{n}{k} - \gamma_{gr}(J_S(n,k))\ge \binom{n}{k} - \binom{n-2(k-s)}{s}, \]
where the first inequality follows from Lemma~\ref{lem:grundy}.
\end{proof}

If~$s=k-1$, we obtain the zero forcing number of the classical Johnson graphs.

\begin{corollary} For $n\ge 2k$,
$Z(J(n,k))=Z_t(J(n,k)) = Z_c(J(n,k)) = \binom{n}{k} - \binom{n-2}{k-1}.$
\end{corollary}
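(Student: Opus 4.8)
The plan is to derive the corollary directly from Theorem~\ref{th:genJohnson} by specializing $s=k-1$. When $S=\{s,s+1,\dots,k-1\}$ and $s=k-1$, the set $S$ collapses to the single element $\{k-1\}$, which is precisely the definition of the classical Johnson graph $J(n,k)=J_{\{k-1\}}(n,k)$. Moreover, with $s=k-1$ the hypothesis $n\ge 2k-s$ becomes $n\ge 2k-(k-1)=k+1$, so the stated assumption $n\ge 2k$ certainly suffices (and in fact we need $n\ge 2k$ only to stay in the canonical range, since $J(n,k)\cong J(n,n-k)$ otherwise).

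Since $S=\{k-1\}=\{s,s+1,\dots,k-1\}$ is of the form required for the equality case of Theorem~\ref{th:genJohnson}, the theorem gives
\[Z(J(n,k))=Z_t(J(n,k))=Z_c(J(n,k))=\binom{n}{k}-\binom{n-2(k-s)}{s}.\]
Substituting $s=k-1$, we get $k-s=1$, so $2(k-s)=2$ and $\binom{n-2(k-s)}{s}=\binom{n-2}{k-1}$, which yields exactly the claimed formula $\binom{n}{k}-\binom{n-2}{k-1}$. So the proof is essentially a one-line substitution, and there is no real obstacle; the only thing to be careful about is confirming that the parameter ranges in Theorem~\ref{th:genJohnson} are met and that the specialization of $S$ genuinely gives the Johnson graph rather than some degenerate object.

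If one wanted to make the corollary self-contained rather than purely a substitution, one could alternatively re-run the argument of Theorem~\ref{th:genJohnson} in this special case: the white set $\Wa$ consists of all $k$-sets containing $[k-1]$ but avoiding the element $k$ (here $k-s=1$, so $2(k-s)=2$, and $[k-s]=\{1\}$, $\{2\}=\{k-s+1,\dots,2(k-s)\}$ — one should double-check the indices, but the shape is: $\Wa$ has size $\binom{n-2}{k-1}$), the forcing step uses that in $J(n,k)$ adjacency means sharing exactly $k-1$ elements, and the lower bound comes from Lemma~\ref{lem:bollobas} with $r=k$, $s=1$, $t=k-1$ applied to the footprinting pairs of a maximum Grundy dominating sequence. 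But since Theorem~\ref{th:genJohnson} already covers this case verbatim, the clean approach is simply to invoke it.
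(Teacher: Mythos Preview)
Your proposal is correct and matches the paper's approach exactly: the corollary is obtained by specializing Theorem~\ref{th:genJohnson} to $s=k-1$, whereupon $S=\{k-1\}$ gives the Johnson graph and the bound becomes $\binom{n}{k}-\binom{n-2}{k-1}$. One small slip in your optional self-contained aside: the Bollob\'as parameters should be $r=k$, $s=n-k$, $t=k-s=1$ (not $r=k$, $s=1$, $t=k-1$), and $\Wa$ consists of $k$-sets containing $1$ and avoiding $2$ (not containing $[k-1]$) --- but since you correctly flag these indices as needing checking and rely on the direct substitution anyway, this does not affect the argument.
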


The Kneser graphs can be generalized in a similar way by extending the intersection set to~$\{0,1,\dots,t\}$ for some integer~$t \le k-1$. This generalization also appears in~\cite{balogh2019coloring}, where the coloring number of generalized Kneser graphs is studied, and~\cite{CIOABA2018219}, which shows that some of these graphs are not determined by their spectrum. As usual for Kneser graphs, we require~$n\ge 2k+1$.

\begin{theorem}\label{th:genKneserLower}
Let~$n\ge 2k+1$ and $S=\{0,1,\dots,t\}$ for some $t \in \{0,1,\dots,k-1\}$. Then
\[Z(J_S(n,k)) \ge \binom{n}{k} - \binom{2k-2t}{k-t}.\]
\end{theorem}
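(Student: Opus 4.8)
The plan is to reuse the lower-bound strategy from the proof of Theorem~\ref{th:genJohnson}, but working with the \emph{Z-Grundy} domination number in place of the ordinary Grundy domination number, and invoking the exact identity $Z(G)=|V|-\gamma_{gr}^Z(G)$ of Lemma~\ref{lem:grundy}. That identity applies because $J_S(n,k)$ has no isolated vertices: since $n\ge 2k$, every $k$-set has a disjoint partner, which is a neighbor as $0\in S$. Hence it suffices to prove
\[\gamma_{gr}^Z(J_S(n,k))\le\binom{2k-2t}{k-t}.\]
The ordinary Grundy domination number is too large for this purpose: for instance, all $k$-sets containing a fixed $(t+1)$-subset of $[n]$ form, in any order, an ordinary Grundy dominating sequence in which every vertex footprints itself, so $\gamma_{gr}$ can be of order $\binom{n-t-1}{k-t-1}$ and the bound $Z\ge|V|-\gamma_{gr}$ is far too weak; the open-neighborhood condition in the definition of a Z-Grundy dominating sequence is exactly what excludes this phenomenon.

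To bound $\gamma_{gr}^Z$, I would take a Z-Grundy dominating sequence $(v_1,\dots,v_m)$ of maximum length and, for each $i$, fix a vertex $w_i$ footprinted by $v_i$, so that $w_i\in G(v_i)$ while $w_i\notin G[v_j]$ for all $j<i$. Rewriting these conditions in terms of intersection sizes: $w_i\sim v_i$ forces $|v_i\cap w_i|\le t$, and $w_i\notin G[v_j]$ forces $|v_j\cap w_i|\ge t+1$ for every $j<i$. Now apply Lemma~\ref{lem:bollobas} with $X_i\coloneqq v_i$ and $Y_i\coloneqq w_i$, which are $k$-subsets of $[n]$, and with the lemma's threshold parameter taken to be $t$: condition~$(i)$ reads $|X_i\cap Y_i|=|v_i\cap w_i|\le t$, and condition~$(ii)$ reads $|X_i\cap Y_j|=|v_i\cap w_j|\ge t+1$ for $i<j$, which holds since $w_j\notin G[v_i]$. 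The lemma yields $m\le\binom{k+k-2t}{k-t}=\binom{2k-2t}{k-t}$, hence $\gamma_{gr}^Z(J_S(n,k))\le\binom{2k-2t}{k-t}$. Combined with Lemma~\ref{lem:grundy} this gives
\[Z(J_S(n,k))=\binom{n}{k}-\gamma_{gr}^Z(J_S(n,k))\ge\binom{n}{k}-\binom{2k-2t}{k-t}.\]

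The computation is short once the framework is chosen, so the real obstacle is conceptual: recognizing that one must pass to the Z-Grundy domination number (and the exact identity of Lemma~\ref{lem:grundy}) rather than the ordinary Grundy domination number. The payoff of this choice is that the footprinted vertex $w_i$ is guaranteed to be a genuine neighbor of $v_i$, not $v_i$ itself, which is precisely what lets the pair $(v_i,w_i)$ satisfy hypothesis~$(i)$ of Bollob\'as's lemma with the useful threshold $t$ rather than the useless threshold $k$. After that point, the only remaining things to verify are the (immediate) absence of isolated vertices required by Lemma~\ref{lem:grundy}, and that the ``earlier-than'' nature of footprinting matches condition~$(ii)$ of Lemma~\ref{lem:bollobas} being stated for indices $i<j$.
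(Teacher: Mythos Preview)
Your proof is correct and follows essentially the same approach as the paper: bound $\gamma_{gr}^Z$ via Lemma~\ref{lem:bollobas} applied to the pairs $(v_i,w_i)$ with threshold $t$, then invoke Lemma~\ref{lem:grundy}. Your write-up is in fact more careful than the paper's, explicitly checking the no-isolated-vertices hypothesis and spelling out why the Z-variant (as opposed to ordinary Grundy domination) is needed here; the paper relegates that observation to a remark after the proof.
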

\begin{proof}
Consider a Z-Grundy dominating sequence~$v_1,v_2,\dots, v_t$ of maximum length and pair each~$v_i$ with a vertex~$w_i$ that is footprinted by it. Then~ $|v_i\cap w_i|\le t$ and if $j>i$,~$|v_i\cap w_j| \ge t + 1$. Lemma~\ref{lem:bollobas} implies that~$\gamma^Z_{gr}(J_S(n,k))\le \binom{2k-2t}{k-t}$, hence
    \[Z(J_S(n,k)) \ge \binom{n}{k} - \gamma^Z_{gr}(J_S(n,k))\ge \binom{n}{k} - \binom{2k-2t}{k-t}.\qedhere\] 
\end{proof}

Note that in the proof of Theorem \ref{th:genKneserLower} we need a Z-Grundy dominating sequence specifically. In a normal Grundy dominating sequence, a vertex could footprint itself, in which case the intersection $|v_i\cap w_i|$ would equal~$k > t$. 

\begin{theorem}
Let~$S\subseteq\{0,1,\dots,k-2\}$ and $n\ge \max(3k-2t+1,2k+1)$, where $t\coloneqq\max(S)$. Then
\[Z(J_S(n,k)) \le Z_t(J_S(n,k)) \le Z_c(J_S(n,k)) \le \binom{n}{k} - \binom{2k-2t}{k-t}.\]
If~$S=\{0,1,\dots,t\}$, equality holds throughout.
\label{th:genKneserUpper}
\end{theorem}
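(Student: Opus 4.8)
The plan is to mirror the structure of the proof of Theorem~\ref{th:genJohnson}, establishing the upper bound on $Z_c$ by exhibiting an explicit connected zero forcing set whose complement is the "white" set, and then matching it from below with Theorem~\ref{th:genKneserLower}. The natural candidate for the white set $\Wa$ is a "dual Hilton--Milner type" configuration: fix two disjoint $(k-t)$-sets, say $A=\{1,\dots,k-t\}$ and $B=\{k-t+1,\dots,2(k-t)\}$, and let $\Wa$ be the set of $k$-sets $v$ with $A\subseteq v$ and $v\cap B=\emptyset$, so that each $v\in\Wa$ is determined by choosing its remaining $t$ elements from $[n]\setminus(A\cup B)$, giving $|\Wa|=\binom{n-2(k-t)}{t}$. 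Wait --- this should instead be indexed so that $|\Wa|=\binom{2k-2t}{k-t}$ to match the bound; concretely I would take $\Wa$ to consist of the $k$-sets contained in a fixed $(2k-t)$-element ground set $U$ and containing a fixed $t$-set $T\subset U$, so that a member of $\Wa$ is $T$ together with a $(k-t)$-subset of the $(2(k-t))$-set $U\setminus T$; this gives $|\Wa|=\binom{2k-2t}{k-t}$, and $\Ba\coloneqq V\setminus\Wa$ has the claimed size. The key forcing step: given $v\in\Wa$, I must find a black vertex $w$ whose only white neighbour is $v$. Write $v=T\cup P$ with $P$ a $(k-t)$-subset of $U\setminus T$, and set $w=T\cup(U\setminus T\setminus P)\cup(\text{extra elements outside }U)$ --- here the extra elements exist because $n\ge 3k-2t+1$ ensures $|[n]\setminus U|\ge k-t+1$, and $|T\cup(U\setminus T\setminus P)|=t+(k-t)=k$, so in fact $w$ should just be $T$ together with the complementary $(k-t)$-block $U\setminus T\setminus P$, which lies in $\Ba$ since it is not contained in... hmm, $w\subseteq U$, so I instead need $w$ to use outside elements to force it into $\Ba$; the condition $n\ge 3k-2t+1$ is exactly what allows $w$ to be a $k$-set disjoint from $P$, containing $T$, and containing at least one point outside $U$, hence $w\notin\Wa$. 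One then checks $|v\cap w|=t\in S$ (using $t=\max(S)\ge 0$ and $S=\{0,\dots,t\}$ for the equality case, or $t\in S$ in general) so $v\sim w$, and that any $w'\in\Wa\setminus\{v\}$ has $|w'\cap w|\le t-1$ since such a $w'$ meets $w$ only inside $T\cup((U\setminus T)\setminus P)$ and differs from $v$ in at least one block-coordinate --- so $w$ forces $v$.

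After the forcing step I would argue, as in Theorem~\ref{th:genJohnson}, that $\Ba$ is moreover \emph{connected}: since in $J_S(n,k)$ any two $k$-sets at "distance one in symmetric difference" --- i.e.\ differing in exactly $k-t$ elements --- are adjacent (because their intersection has size $t\in S$), I can walk between any two black vertices by swapping $k-t$ elements at a time, routing through a $(2k-t)$-element superset at each move and choosing the intermediate $k$-sets to contain a point outside $U$ (again using $n\ge 3k-2t+1$ to have room), which keeps every intermediate vertex in $\Ba$. This yields
\[
Z(J_S(n,k))\le Z_t(J_S(n,k))\le Z_c(J_S(n,k))\le \binom{n}{k}-\binom{2k-2t}{k-t}.
\]
For the case $S=\{0,1,\dots,t\}$, combining this with the lower bound $Z(J_S(n,k))\ge\binom{n}{k}-\binom{2k-2t}{k-t}$ from Theorem~\ref{th:genKneserLower} forces equality throughout.

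The main obstacle I anticipate is pinning down the exact role of the hypothesis $n\ge\max(3k-2t+1,2k+1)$: it must simultaneously (a) guarantee that the forcing vertex $w$ can be chosen so that $w\notin\Wa$ while still meeting $v$ in exactly $t$ elements, (b) guarantee the uniqueness of $v$ as a white neighbour of $w$, and (c) provide enough "slack" outside the ground set $U$ to keep the connecting paths inside $\Ba$. Getting the constant $3k-2t+1$ to be tight, rather than something weaker, requires a careful count of how many points outside $U$ are needed in the worst case of the connectivity argument --- and verifying that when $t=k-1$ (the classical Johnson case, handled by the corollary after Theorem~\ref{th:genJohnson} but with the \emph{other} complementation) and when $t=0$ (Kneser) the bound degenerates correctly. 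A secondary subtlety is that the lower-bound theorem is stated only for $S=\{0,\dots,t\}$, so the general upper bound and the equality claim must be phrased and proved separately, exactly as the theorem statement does.
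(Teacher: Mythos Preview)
Your proposed white set $\Wa=\{T\cup P: P\in\binom{U\setminus T}{k-t}\}$ (with $|U|=2k-t$ and $T\subset U$, $|T|=t$) has the right cardinality, but it is \emph{not} the complement of a zero forcing set, and the forcing step you sketch breaks in two places. First, the adjacency direction is reversed: since $t=\max(S)$, non-adjacency in $J_S(n,k)$ means intersection size strictly greater than $t$, so a pivot $w$ for $v$ must satisfy $|w'\cap w|\ge t+1$ for every other white $w'$, not $|w'\cap w|\le t-1$ as you write (the latter would make $w'$ a neighbour of $w$ whenever that value lies in $S$). Second --- and this is the real obstruction --- even with the correct inequality no black pivot exists. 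Every $w'\in\Wa$ contains $T$, so if $w\supseteq T$ then $|w'\cap w|\ge t$ automatically; requiring $|w'\cap w|\ge t+1$ for all $w'=T\cup P'$ with $P'\neq P$ forces $w\setminus T$ to meet every $(k-t)$-subset of the $2(k-t)$-set $U\setminus T$ except $P$, which pins down $w=T\cup\bigl((U\setminus T)\setminus P\bigr)\in\Wa$. Your fix of pushing an element of $w$ outside $U$ destroys the blocking property: then $|(U\setminus T)\setminus w|\ge k-t+1$, so at least two choices of $P'\subset U\setminus T$ avoid $w$, giving $|w'\cap w|=t$ and hence multiple white neighbours. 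The Kneser case $t=0$ already exhibits the deadlock: your $\Wa$ is all $k$-subsets of a fixed $2k$-set, complementary pairs in $\Wa$ are adjacent, and no vertex outside $\Wa$ can have a unique white neighbour.

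The paper's proof avoids this by splitting the white set into two families $\Wa=\Wa_1\cup\Wa_2$ of sizes $\binom{2k-2t-1}{k-t}$ and $\binom{2k-2t-1}{k-t-1}$, distinguished by whether the element $t+1$ is present: $\Wa_1=\{[t]\cup Q:Q\in\binom{T}{k-t}\}$ for a fixed $(2k{-}2t{-}1)$-set $T$ at the top of $[n]$, and $\Wa_2=\{[t+1]\cup R:R\in\binom{\{t+2,\dots,2k-t\}}{k-t-1}\}$. The pivot for $v\in\Wa_1$ is $w=[t+1]\cup(T\setminus v)$; it contains $t+1$ so $w\notin\Wa_1$, and the hypothesis $n\ge 3k-2t+1$ guarantees $w$ also contains an element outside $[2k-t]$, so $w\notin\Wa_2$. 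Once $\Wa_1$ is forced, each $v\in\Wa_2$ is forced by $[t]\cup([2k-t]\setminus v)$. This two-stage, two-family structure --- not a larger ambient set or more outside room --- is the missing idea.
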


\begin{proof}
Define~$T \coloneqq \{n-2k+2t+2,n-2k+2t+3,\dots, n\}$ and~$\Wa \coloneqq\Wa_1\cup\Wa_2$, where
\begin{align*}
 \Wa_1 &= \{v \in V\mid |v\cap T| = k-t,\ [t]\subset v\},\\
 \Wa_2 &=\{v\in V \mid v\subset [2k-t],\ [t+1] \subset v\},
\end{align*}
and let $\Ba \coloneqq V \setminus \Wa$.
Since~$n \ge 2k-t$, we know that~$t+1 \notin T$. Then~$\Wa$ contains
\[\binom{2k-2t-1}{k-t-1} + \binom{2k-2t - 1}{k-t} = \binom{2k-2t}{k-t}\] 
distinct vertices,
so~$\Ba$ has the desired cardinality. We will show that it is a connected zero forcing set by first forcing the vertices in~$\Wa_1$ and then those in~$\Wa_2$.

Let~$v\in \Wa_1$ and let~$w$ be the vertex~$(T\setminus v)\cup [t + 1]$. Note that~$w\sim v$, since their intersection is~$[t]$. For any other~$v'\in \Wa_1$, we have~$|w\cap v'| = |[t]\cup( (T\setminus v)\cap v')|\ge t+1$, so~$w\not\sim v'$. Moreover,~$|w\cap v''| \ge |[t+1]| = t+1$ for any~$v''\in \Wa_2$, so~$w$ has no white neighbors besides~$v$. If~$w$ is black, it will therefore force~$v$. This is the case if~$(T\setminus [2k-t])\not\subset v$, because then we cannot have~$w\in \Wa_2$. If this should hold for any choice of~$v$, we need~$|T\setminus [2k-t]| = n-2k+t > k-t$, which is satisfied by assumption, as~$n>3k-2t$. Therefore,~$\Wa_1$ can be forced entirely.

Next, consider a vertex~$v\in \Wa_2$ and let~$w=([2k-t]\setminus v)\cup [t] \in \Ba$. Then~$w\sim v$, but~$w$ is not adjacent to any other~$v'\in \Wa_2$, because~$|w\cap v'| = |[t] \cup (([2k-t]\setminus v)\cap v')|\ge t+1$. This means that~$v$ is the unique white neighbor of~$w$, so it will be colored black. 

The above construction also holds for the connected zero forcing number. The proof is analogous to that of Theorem~\ref{th:genJohnson}. If~$S = \{0,1,\dots,t\}$, it follows from Theorem~\ref{th:genKneserLower} that the upper bound is tight for the (connected) zero forcing number of~$J_S(n,k)$.
\end{proof}

The construction in Theorem~\ref{th:genKneserUpper} no longer works if~$n\le 3k-2t$, as some pivots of~$\Wa_1$ will be contained in the white set~$\Wa_2$, which is forced last. However, for the extremal case~$n=3k-2t$, we can construct a different zero forcing set of the same cardinality.

\begin{theorem}
Let~$S\subseteq\{0,1,\dots,k-2\}$ with~$t\coloneqq\max(S)$, and let~$n=3k-2t \ge 2k+1$. Then
\[Z(J_S(n,k)) \le Z_t(J_S(n,k)) \le Z_c(J_S(n,k)) \le \binom{n}{k} - \binom{2k-2t}{k-t}.\]
If~$S=\{0,1,\dots,t\}$, equality holds throughout.
\label{th:genKneserEdgeCase}
\end{theorem}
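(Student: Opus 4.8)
The target is a connected zero forcing set of size $\binom{n}{k}-\binom{2k-2t}{k-t}$; the ``equality holds throughout'' part is then free, since Theorem~\ref{th:genKneserLower} already gives $Z(J_S(n,k))\ge\binom{n}{k}-\binom{2k-2t}{k-t}$ when $S=\{0,\dots,t\}$, and $Z\le Z_t\le Z_c$. Write $m:=k-t$. The natural starting point is the leader set $\Ba=V\setminus(\Wa_1\cup\Wa_2)$ of Theorem~\ref{th:genKneserUpper}, with $T=\{n-2k+2t+2,\dots,n\}$. Tracing that proof, the forcing of $\Wa_1$ goes through for every vertex except one: for $v^\ast:=[t]\cup([n]\setminus[2k-t])\in\Wa_1$ the designated pivot $(T\setminus v^\ast)\cup[t+1]$ ends up inside $\Wa_2$. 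When $n=3k-2t$ this really is unavoidable, because $[n]\setminus[2k-t]$ has exactly $m$ elements, so exactly one $(m-1)$-subset of $T$ is squeezed into $[2k-t]$.

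The plan is to repair the white set locally: delete $v^\ast$ from it (so $v^\ast$ becomes a leader) and add one new white vertex $y$, keeping $|\Wa|=\binom{2k-2t}{k-t}$. I would take $y$ to agree with $[t+1]$ in all but one coordinate and to contain $[n]\setminus[2k-t]$ entirely, so that $y$ is not adjacent to any of the pivots $(T\setminus v)\cup[t+1]$ with $v\in\Wa_1\setminus\{v^\ast\}$ (these all contain $[t+1]$); one also checks $y\notin\Wa_1\cup\Wa_2$ and $y\ne v^\ast$. Then one forces in the order: first all of $\Wa_1\setminus\{v^\ast\}$ (each $v$ by its old pivot, which is still a leader and still has $v$ as its unique white neighbour, the only new thing to verify being non-adjacency to $y$), then $y$, then all of $\Wa_2$ (each $v$ by its old pivot $([2k-t]\setminus v)\cup[t]$, which is a leader and, now that $\Wa_1$ and $y$ are black, has $v$ as its unique white neighbour exactly as in Theorem~\ref{th:genKneserUpper}). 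Connectivity of the leader set is then argued as in the proof of Theorem~\ref{th:genJohnson}: between two leaders one builds a black path by repeatedly exchanging $m$ coordinates at a time through intermediate sets chosen to avoid $\Wa$, and since $\Wa$ differs from $\Wa_1\cup\Wa_2$ in only two vertices the same construction survives with small adjustments.

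The heart of the matter --- and the step I expect to fight with --- is slotting $y$ into the forcing order. For $n>3k-2t$ the leader set has room to spare, and the pivots of $\Wa_1$ automatically miss $\Wa_2$; at $n=3k-2t$ that room is exactly exhausted, so $y$ must be forced without disturbing the forcing of $\Wa_2$, whose pivots $([2k-t]\setminus v)\cup[t]$ are spread over a $(2k-2t-1)$-element set and each meet $\Wa_2$ in a single vertex. The delicate point is therefore to schedule one such pivot to free up and force $y$ (or to find an external leader containing $[t+1]$ but not contained in $[2k-t]$ that is adjacent to $y$ only), and this may force a slightly different choice of $y$, or a more elaborate interleaving than the ``all of $\Wa_1$, then $y$, then all of $\Wa_2$'' sketch above. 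In any case it is exactly here --- not in the cardinality count nor in the connectivity check --- that the boundary case genuinely departs from the generic one.
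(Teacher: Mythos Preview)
Your diagnosis of the obstruction is correct, and the idea of a one-vertex swap in the white set is exactly what the paper does. But the specific swap you propose does not close; and the paper's swap is different in a way that matters.

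With your $y=([t+1]\setminus\{j\})\cup([n]\setminus[2k-t])$ (and $v^\ast$ promoted to a leader), step~1 indeed goes through: every $\Wa_1$-pivot $(T\setminus v)\cup[t+1]$ meets $y$ in at least $t+1$ points. The trouble is after that. Each $\Wa_2$-pivot $w_{v''}=([2k-t]\setminus v'')\cup[t]$ satisfies $|w_{v''}\cap y|=t-1$, so for $S=\{0,\dots,t\}$ \emph{every} $\Wa_2$-pivot is adjacent to $y$: none of $\Wa_2$ can be forced while $y$ is white. On the other hand, any black vertex $p$ with $|p\cap v''|\ge t+1$ for all $v''\in\Wa_2$ must contain $[t+1]$ (a short counting argument on $|p\cap[t+1]|$ and $|p\cap\{t+2,\dots,2k-t\}|$ shows this), and then either $p\subset[2k-t]$, so $p\in\Wa_2$ is white, or $p$ picks up an element of $[n]\setminus[2k-t]\subset y$ and $|p\cap y|\ge t+1$, so $p\not\sim y$. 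Hence $y$ cannot be forced while $\Wa_2$ is white either. The process is deadlocked; the ``slightly different $y$'' you anticipate is not a tweak but a change of plan.

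What the paper does instead is swap the \emph{other} vertex in the bad pair: it puts the problematic pivot $[t+1]\cup(T\cap[2k-t])\in\Wa_2$ into the leader set (rather than $v^\ast\in\Wa_1$), and inserts a new white vertex $v'=[t]\cup\{t+2\}\cup X\cup\{x\}$ with $X=(T\cap[2k-t])\setminus\{y_0\}$ and $x\in T\setminus[2k-t]$. The resulting forcing order is genuinely interleaved: first force all of $\Wa_1$ whose old pivot is not adjacent to $v'$ (i.e.\ those $v$ with $X\cup\{x\}\not\subset v$); then force all of $\Wa_2$, replacing the coordinate $y_0$ by $x$ in the pivot whenever the original pivot would see $v'$ or a still-white $\Wa_1$-vertex; then force $v'$ using a now-black $\Wa_1$-vertex; finally force the remaining $\Wa_1$-vertices with their original (now unblocked) pivots. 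The key point your sketch misses is that one cannot keep the clean ``all of $\Wa_1$, then $y$, then all of $\Wa_2$'' schedule at $n=3k-2t$; some $\Wa_1$-vertices must wait until after $\Wa_2$, and some $\Wa_2$-pivots must be modified. The connectivity and tightness arguments are as you say.
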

\begin{proof}
Consider the sets~$T$, $\Wa$ and $\Ba$ from Theorem~\ref{th:genKneserUpper}. The zero forcing process used  in the construction in Theorem~\ref{th:genKneserUpper} is no longer applicable when~$n=3k-2t$ because the white vertex~$v\coloneqq[t+1]\cup (T\cap [2k-t]) \in \Wa_2$ now acts as a pivot for~$[t]\cup (T\setminus [2k-t])\in \Wa_1$. However, we want to force~$\Wa_1$ before~$\Wa_2$. Therefore, the zero forcing process will no longer color the entire graph. We propose the following change.

Let~$x\in T\setminus [2k-t]$,~$y\in T\cap [2k-t]$. Define~$X = (T\cap [2k-t])\setminus \{y\}$. We add the previously white vertex~$v=[t+1]\cup (T\cap [2k-t])$ to the leader set $\Ba$, and instead color~$v' = [t]\cup \{t+2\} \cup X\cup \{x\}$ white (note that~$|v'| = t + 1 + (k-t-2) + 1 = k$, so this is indeed a~$k$-set). We will show that this gives a zero forcing set. 

Using the same pivots as in the proof of Theorem~\ref{th:genKneserUpper}, all vertices of~$\Wa_1$ can be colored black, except those containing~$X \cup \{x\}$. We will force those later and focus on~$\Wa_2$ first. 

Consider the pivots from Theorem~\ref{th:genKneserUpper} corresponding to~$\Wa_2$. A vertex cannot be forced if its pivot is adjacent to~$v'$ or a white vertex of~$\Wa_1$. It meets at least one of these conditions whenever it contains~$X$. Let~$v''$ be such a vertex. Its pivot contains~$y$, as~$v''\neq v$. Replace~$y$ by~$x$, then the resulting vertex has no other white neighbors in~$\Wa_2$ and is not adjacent to~$v'$ or any white vertex from~$\Wa_1$ (both contain~$x$). Moreover, it is black, so it can force~$v''$. Hence all of~$\Wa_2$ can be colored black. 

Now consider the vertex~$[t]\cup (T\setminus (\{i\}\cup [2k-t]))\cup \{y\}$. This vertex from~$\Wa_1$ has been forced already and is not adjacent to any white vertices in~$\Wa_1$, which must contain at least one element from~$(T\setminus (\{i\}\cup [2k-t]))\cup\{y\}$. It can therefore force~$v'$. The remainder of~$\Wa_1$ can then be forced by their usual pivots.

Once again, tightness of this construction for~$S=\{0,1,\dots,t\}$ follows from Theorem~\ref{th:genKneserLower}. The connectivity argument is analogous to the one in the proof of Theorem~\ref{th:genJohnson}.
\end{proof}

Suppose~$\Ba$ is a zero forcing set for~$J_{\{0,1,\dots,t\}}(n,k)$ and~$\Wa=V\setminus\Ba$. Then for any~$n'> n$,~$V \setminus \Wa$ is also a zero forcing set for~$J_{\{0,1,\dots,t\}}(n',k)$. This means that if we can find a tight construction for the smallest case~$n=2k+1$, this gives a tight bound for all admissible values of~$n$. Such a zero forcing set can be found computationally when~$(n,k,t)\in\{(9,4,1),(11,5,1),(13,6,1)\}$, but not for~$(7,3,1)$ (see~\cite{cocalc} for source code). This suggests that it may be possible to extend Theorem~\ref{th:genKneserUpper} to all triples~$(n,k,t)$ such that~$n \ge 2k+1$ and~$t \le k-3$.

\subsection{Generalized Grassmann graphs}\label{sec:GrassmannZ}

Lemma~\ref{lem:bollobas} has the following analogue for subspaces over a field. 
\begin{lemma}[\cite{furedi84}]
\label{lem:bollobassubspace}
Let~$U_1,U_2,\dots, U_m$ be~$r$-dimensional subspaces and let~$W_1,W_2,\dots,W_m$ be~$s$-dimensional subspaces of a linear space over a field~$\mathbb{F}$. Let $t\ge 0$ such that
\begin{description}
    \item[$(i)$] $\dim(U_i\cap W_i) \le t$ for~$i\in [m]$;
    \item[$(ii)$] $\dim(U_i\cap W_j) \ge t+1$ for~$1\le i<j\le m$.
\end{description}
Then~$m\le \binom{r+s-2t}{r-t}$.
\end{lemma}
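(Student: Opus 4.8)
The statement is the $q$-analogue of Lemma~\ref{lem:bollobas} (the skew version of Bollob\'as's inequality), so the natural approach is to mirror the set-system proof using the exterior algebra instead of the symmetric/combinatorial argument. First I would fix bases and pass to the exterior algebra $\bigwedge \mathbb{F}^N$, where $N = \dim(\mathbb{F})$ or, after a harmless restriction, the dimension of the span of all the $U_i$ and $W_i$ together; since each $\dim U_i = r$ and $\dim W_i = s$, everything lives inside a finite-dimensional space and we may assume $N$ finite. For each $i$, pick a nonzero decomposable $(r-t)$-vector $\alpha_i \in \bigwedge^{r-t} U_i$ and a nonzero decomposable $(s-t)$-vector $\beta_i \in \bigwedge^{s-t} W_i$. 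The key algebraic fact to set up is that for decomposable vectors, $\gamma \wedge \delta = 0$ if and only if the underlying subspaces have nontrivial intersection (equivalently, the dimension of the sum is less than the sum of the dimensions).

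The main technical point — and the step I expect to be the real obstacle — is engineering the wedge products $\alpha_i \wedge \beta_i$ to be nonzero while $\alpha_i \wedge \beta_j = 0$ for $i < j$, using only the dimension hypotheses $(i)$ and $(ii)$. Hypothesis $(ii)$, $\dim(U_i \cap W_j) \ge t+1$ for $i<j$, does not immediately force $\alpha_i \wedge \beta_j = 0$ for arbitrary choices of $\alpha_i, \beta_j$, because the wedge only sees the particular subspaces spanned by those vectors, not $U_i$ and $W_j$ in full. The standard remedy is to choose the $\alpha_i$ and $\beta_j$ with care: take a complement $U_i' $ of $U_i \cap W_i$ inside $U_i$ (so $\dim U_i' \le r - t \cdot 0$... more precisely one arranges $\alpha_i$ to span a subspace meeting $W_i$ trivially, which is possible since $\dim(U_i \cap W_i) \le t$ leaves room) and similarly pick $\beta_j$ inside a complement, then argue that for $i < j$ the forced overlap of dimension $\ge t+1$ between $U_i$ and $W_j$ survives after removing only $t$ dimensions from each side, guaranteeing $\alpha_i$ and $\beta_j$ still share a nonzero vector and hence wedge to zero. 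This "genericity inside the complements" argument is exactly where F\"uredi's refinement of Bollob\'as is needed, and it should be carried out by a dimension count identical in spirit to the one used for the set case.

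Once the vectors $\{\alpha_i \wedge \beta_i\}_{i=1}^m$ are in hand with $\alpha_i \wedge \beta_i \ne 0$ and $\alpha_i \wedge \beta_j = 0$ for $i < j$, the finish is the usual triangular-dependence argument: suppose $\sum_i c_i\, \alpha_i \wedge \beta_i = 0$; wedge on the right with $\beta_1$... rather, pick the smallest index $k$ with $c_k \ne 0$ and wedge the relation with a suitable element annihilating $\alpha_i$ for $i > k$ — concretely, since $\alpha_i \wedge \beta_k = 0$ for $i < k$ does \emph{not} hold (the skew condition is one-sided), the clean way is to order so that the matrix $(\alpha_i \wedge \beta_j)_{i,j}$ is triangular: $\alpha_i \wedge \beta_j = 0$ for $i < j$ and $\alpha_i \wedge \beta_i \ne 0$, which gives that the $\alpha_i \wedge \beta_i$ are linearly independent by a bottom-up elimination (apply the functional dual to $\beta_m$, then $\beta_{m-1}$, etc.). Hence $m \le \dim \bigwedge^{r+s-2t}(\text{ambient space})$; but more sharply, all the $\alpha_i \wedge \beta_i$ lie in $\bigwedge^{r+s-2t}$ of the $(r+s-2t)$-dimensional-at-most common space... the correct bound comes from observing they live in $\bigwedge^{r-t}$ tensored appropriately, yielding $m \le \binom{r+s-2t}{r-t}$ exactly as in the combinatorial case. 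I would present this final counting step by direct analogy to the proof of Lemma~\ref{lem:bollobas}, citing \cite{furedi84} for the details, since it is entirely routine once the exterior-algebra setup is correct.
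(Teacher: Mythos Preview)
The paper does not prove this lemma; it is quoted from \cite{furedi84} with no argument given, so there is nothing in the paper to compare your sketch against.

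Your outline, however, has a real gap at precisely the step you flag as the obstacle. The claim that ``removing only $t$ dimensions from each side'' preserves a nonzero overlap is false as a dimension count: with $A_i\subset U_i$ of codimension $t$ and $B_j\subset W_j$ of codimension $t$, one only gets $\dim(A_i\cap B_j)\ge\dim(U_i\cap W_j)-2t\ge 1-t$, which is vacuous for $t\ge 1$. Concretely, with $t=1$, $r=s=2$, $U_1=W_2=\langle e_1,e_2\rangle$, $W_1=\langle e_1,e_3\rangle$, $U_2=\langle e_2,e_3\rangle$, the complement choices $A_1=\langle e_2\rangle$ and $B_2=\langle e_1\rangle$ satisfy all your local constraints yet $A_1\cap B_2=0$, so $\alpha_1\wedge\beta_2\neq 0$. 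F\"uredi's actual argument handles this with a global general-position step (a generic projection of the ambient space), not by per-index complement choices. That same projection is also what brings the ambient dimension down to $r+s-2t$ and thereby yields the bound $\binom{r+s-2t}{r-t}$; your final paragraph conflates $\bigwedge^{r+s-2t}V$ (which has dimension $\binom{N}{r+s-2t}$) with an $(r+s-2t)$-dimensional space, and ``$\bigwedge^{r-t}$ tensored appropriately'' does not produce the stated binomial. Since you end by deferring to \cite{furedi84} anyway, the honest route is to do what the paper does and simply cite the result.
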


Using the above lemma, we obtain a lower bound on~$Z(J_{q,\{0,1,\dots,t\}}(n,k))$, similar to the generalized Johnson case.

\begin{theorem}\label{th:qKneserLower}
Let~$n\ge 2k+1$ and $S=\{0,1,\dots,t\}$ for some $t \in \{0,1,\dots,k-1\}$. Then
$$Z(J_{q,S}(n,k)) \ge \gauss{n}{k}{q} - \binom{2k-2t}{k-t}.$$
\end{theorem}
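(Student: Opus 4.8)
The plan is to mirror the proof of Theorem~\ref{th:genKneserLower}, replacing the set-theoretic Bollob\'as-type inequality (Lemma~\ref{lem:bollobas}) by its subspace analogue (Lemma~\ref{lem:bollobassubspace}). Concretely, I would start from a maximum-length Z-Grundy dominating sequence $v_1,v_2,\dots,v_m$ of $J_{q,S}(n,k)$, where $m=\gamma^Z_{gr}(J_{q,S}(n,k))$, and for each $i$ pick a vertex $w_i$ that is footprinted by $v_i$, i.e.\ $w_i\in G(v_i)\setminus\bigcup_{j<i}G[v_j]$. Recall the vertices are $k$-subspaces of $\mathbb{F}_q^n$.

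The two conditions of Lemma~\ref{lem:bollobassubspace} must then be checked with $U_i=v_i$, $W_i=w_i$, and $t$ as in the statement (so $r=s=k$ in the notation of that lemma). For condition $(i)$: since the sequence is \emph{Z}-Grundy (not merely Grundy), $w_i$ is footprinted via the \emph{open} neighbourhood, so $w_i$ is adjacent to $v_i$ rather than equal to it; adjacency means $\dim(v_i\cap w_i)\in S=\{0,1,\dots,t\}$, hence $\dim(U_i\cap W_i)\le t$. (This is exactly the place where, as the paper's own remark after Theorem~\ref{th:genKneserLower} stresses, a plain Grundy sequence would fail, because $w_i$ could be $v_i$ itself and give intersection dimension $k>t$.) For condition $(ii)$, take $i<j$: if we had $\dim(v_i\cap w_j)\ge t+1$, then $v_i\cap w_j\notin S$ would still be irrelevant — what matters is that $w_j$ would then be dominated by $v_i$, since either $w_j=v_i$ or $\dim(v_i\cap w_j)\le k-1$ with the intersection dimension possibly landing outside $S$. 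Here I must be a little careful: I want to conclude $w_j\in G[v_i]$. Indeed $w_j$ is footprinted by $v_j$, so $w_j\notin\bigcup_{\ell<j}G[v_\ell]$, in particular $w_j\notin G[v_i]$, so $w_j\neq v_i$ and $v_i\not\sim w_j$, i.e.\ $\dim(v_i\cap w_j)\notin S$. Since $S=\{0,1,\dots,t\}$, this forces $\dim(v_i\cap w_j)\ge t+1$, giving condition $(ii)$. So both hypotheses hold, and Lemma~\ref{lem:bollobassubspace} yields $m\le\binom{2k-2t}{k-t}$.

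Finally I would invoke Lemma~\ref{lem:grundy} (the second bullet, $Z(G)=|V|-\gamma^Z_{gr}(G)$), which applies since $J_{q,S}(n,k)$ has no isolated vertices for $n\ge 2k+1$ (Proposition~\ref{thm:girthgrassmann} already shows it even contains triangles). Combining,
\[Z(J_{q,S}(n,k))=\gauss{n}{k}{q}-\gamma^Z_{gr}(J_{q,S}(n,k))\ge\gauss{n}{k}{q}-\binom{2k-2t}{k-t},\]
which is the claimed bound.

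I do not expect any serious obstacle here: the argument is essentially a transcription of the Johnson-case proof, and the only thing to get exactly right is the direction of the implication ``not dominated by $v_i$ $\Rightarrow$ intersection dimension $\ge t+1$,'' which uses that $S$ is precisely a down-set $\{0,1,\dots,t\}$ (so its complement within $\{0,\dots,k-1\}$ is an up-set) together with the fact that equality $w_j=v_i$ is also excluded. If anything is delicate it is making sure the Z-Grundy versus Grundy distinction is invoked correctly for condition $(i)$ and that Lemma~\ref{lem:bollobassubspace} is being applied with the correct parameters $r=s=k$; both are routine once spelled out.
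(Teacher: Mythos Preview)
Your proposal is correct and follows essentially the same approach as the paper: take a maximum Z-Grundy dominating sequence, pair each $v_i$ with a footprinted $w_i$, verify the two hypotheses of Lemma~\ref{lem:bollobassubspace} with $r=s=k$, and conclude via Lemma~\ref{lem:grundy}. Your write-up is in fact more careful than the paper's, spelling out why $w_j\notin G[v_i]$ forces $\dim(v_i\cap w_j)\ge t+1$ (using that $S$ is a down-set and that $w_j\neq v_i$), and explicitly noting that the graph has no isolated vertices so that Lemma~\ref{lem:grundy} applies.
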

\begin{proof}
Consider a Z-Grundy dominating sequence~$v_1,v_2,\dots, v_s$ of maximum length in~$J_{q,S}(n,k)$ and pair each~$v_i$ with a vertex~$w_i$ that is footprinted by it. Then~$\dim(v_i\cap w_i)\le t$ and if $j>i$,~$\dim(v_i\cap w_j) \ge t + 1$. Lemma~\ref{lem:bollobassubspace} implies that~$\gamma^Z_{gr}(J_{q,S}(n,k))\le \binom{2k-2t}{k-t}$, hence
    \[Z(J_{q,S}(n,k)) \ge \gauss{n}{k}{q} - \gamma^Z_{gr}(J_{q,S}(n,k))\ge \gauss{n}{k}{q} - \binom{2k-2t}{k-t}.\qedhere\] 
\end{proof}

For a given vector space~$\mathbb{F}^n_q$, let~$\a_1,\a_2,\dots,\a_n$ be an orthogonal basis. Note that the proofs of Theorem~\ref{th:genKneserUpper} and~\ref{th:genKneserEdgeCase} are still valid for~$K_q(n,k)$ if we replace each subset of~$[n]$ with the corresponding set of basis vectors. Therefore, we have the following analogous result for~$J_{q,S}(n,k)$. 

\begin{corollary}
Let~$S\subseteq\{0,1,\dots,k-2\}$ with~$t\coloneqq\max(S)$, and~$n\ge \max(3k-2t+1,2k+1)$. Then
\[Z(J_{q,S}(n,k)) \le Z_t(J_{q,S}(n,k)) \le Z_c(J_{q,S}(n,k)) \le \gauss{n}{k}{q} - \binom{2k-2t}{k-t}.\]
If~$S=\{0,1,\dots,t\}$, equality holds throughout.
\label{th:qKneserUpper}
\end{corollary}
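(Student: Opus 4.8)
The plan is to translate the combinatorial constructions of Theorems~\ref{th:genKneserUpper} and~\ref{th:genKneserEdgeCase} verbatim into the subspace setting, exploiting the fact that an orthogonal basis $\a_1,\dots,\a_n$ of $\mathbb{F}_q^n$ lets us mimic all the set-theoretic operations used there. Concretely, for a subset $I\subseteq[n]$ write $\langle I\rangle$ for the span of $\{\a_i:i\in I\}$; then $\langle I\rangle\cap\langle J\rangle=\langle I\cap J\rangle$ and $\langle I\rangle+\langle J\rangle=\langle I\cup J\rangle$, so any statement in the earlier proofs about intersection sizes of $k$-sets becomes a statement about intersection dimensions of the corresponding $k$-subspaces. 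The white sets $\Wa_1,\Wa_2$ (and their edge-case modifications) are defined exactly as before but with each $k$-subset replaced by its span, and the black set $\Ba$ is the complement inside the vertex set of $J_{q,S}(n,k)$. The key point is that $\Ba$ in $K_q(n,k)$ naturally embeds as a subset of $\Ba$ in $J_{q,S}(n,k)$ for the larger connection set $S$: every forcing step we perform only uses pairs of $k$-subspaces whose intersection has a prescribed dimension $\dim\le t$ (so they are adjacent in $J_{q,S}$ whenever that dimension lies in $S$), and since $S$ contains $\{0,1,\dots,t\}$ by hypothesis on $\max(S)$… more precisely, we need the pivot--target pairs to be adjacent in $J_{q,S}(n,k)$, which holds because in the constructions all such pairs meet in exactly a $t$-subspace, and $t=\max(S)\in S$.

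First I would state the embedding lemma: if $\Ba$ is a (connected/total) zero forcing set for $K_q(n,k)$ whose forcing process only ever uses pivot--target pairs meeting in exactly a $t$-space, then the same vertex set is a (connected/total) zero forcing set for $J_{q,S}(n,k)$ whenever $t\in S\subseteq\{0,\dots,t\}$… but since $S$ may be a proper subset of $\{0,1,\dots,t\}$ we must check that adjacency is not lost, which is why restricting to pairs meeting in exactly dimension $t$ is the safe choice. Then I would revisit the proofs of Theorems~\ref{th:genKneserUpper} and~\ref{th:genKneserEdgeCase} and observe that every forcing step there is of precisely this form: the pivot $w$ and its unique white neighbor $v$ always satisfy $|w\cap v|=t$ (this is visible in each step — e.g.\ $w=(T\setminus v)\cup[t+1]$ meets $v$ in $[t]$, $w=([2k-t]\setminus v)\cup[t]$ meets $v$ in $[t]$, and likewise in the edge case after the $y\mapsto x$ swap). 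Replacing sets by spans, all cardinality computations that established "$|w\cap v'|\ge t+1$ for other white $v'$" become dimension computations giving $\dim(w\cap v')\ge t+1$, hence $w\not\sim v'$ in $J_{q,S}(n,k)$ too, and the forcing still goes through. The cardinality bookkeeping is identical: $\Wa_1\cup\Wa_2$ has $\binom{2k-2t-1}{k-t-1}+\binom{2k-2t-1}{k-t}=\binom{2k-2t}{k-t}$ vertices, so $|\Ba|=\gauss{n}{k}{q}-\binom{2k-2t}{k-t}$.

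For connectivity and the total variant, the argument in Theorem~\ref{th:genJohnson} carries over: two black $k$-subspaces can be joined by a path in which at each step we swap one basis vector $\a_a$ for a new basis vector $\a_b$ (using a $(k+1)$-subspace $\langle v\cup\{\a_a,\a_b\}\rangle$ as intermediary), choosing the intermediate vertices to avoid $\Wa$, which is possible by the same counting as before. Finally, the matching lower bound for $S=\{0,1,\dots,t\}$ is exactly Theorem~\ref{th:qKneserLower}, so equality holds throughout. The main obstacle I anticipate is purely bookkeeping rather than conceptual: one must verify, step by step, that in each forcing move of the two Kneser proofs the pivot and its target meet in a subspace of dimension exactly $t$ (not merely $\le t$), so that adjacency survives the passage to $J_{q,S}(n,k)$ for arbitrary $S$ with $\max(S)=t$; a secondary subtlety is confirming that the edge-case swap in Theorem~\ref{th:genKneserEdgeCase} — where one white vertex is promoted to the leader set and another demoted — still produces pairs meeting in exactly dimension $t$ after the substitution, and that the final clean-up forcing of $v'$ and the remainder of $\Wa_1$ uses only such pairs. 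Since the paper already asserts "the proofs of Theorem~\ref{th:genKneserUpper} and~\ref{th:genKneserEdgeCase} are still valid for $K_q(n,k)$", this reduces to checking that those proofs, read with "exactly $t$" in place of "$\le t$" at the pivot step, go through unchanged — which they do.
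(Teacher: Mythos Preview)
Your approach is exactly the paper's: fix a basis $\a_1,\dots,\a_n$ of $\mathbb{F}_q^n$, replace every subset of $[n]$ appearing in the proofs of Theorems~\ref{th:genKneserUpper} and~\ref{th:genKneserEdgeCase} by the span of the corresponding basis vectors, observe that all intersection-size statements become intersection-dimension statements so the forcing process and the cardinality count $\binom{2k-2t}{k-t}$ transfer unchanged, and invoke Theorem~\ref{th:qKneserLower} for the matching lower bound when $S=\{0,1,\dots,t\}$. The paper compresses all of this into the single sentence preceding the corollary, while you spell out the bookkeeping (including the check that pivot--target pairs meet in dimension exactly $t$), but the content is identical; the one place to be slightly more careful than you were is the connectivity argument, since arbitrary black $k$-subspaces need not be spanned by basis vectors, though this is easily patched using the abundance of common neighbours relative to the constant-size white set.
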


Note that this strategy can not be used to extend Theorem~\ref{th:genJohnson} to generalized Grassmann graphs. As an example, consider the graph~$J_{2,\{1\}}(4,2)$. Theorem~\ref{th:genKneserUpper} implies that the corresponding Johnson graph~$J(4,2)$ has minimum zero forcing set~$|V|\setminus \{\{1,3\},\{1,4\}\}$. In~$J_{2,\{1\}}(4,2)$, the corresponding set $V\setminus\{\{a_1,a_3\},\{a_1,a_4\}\}$ is also zero forcing, but not of minimum cardinality; there exists a significantly smaller zero forcing set
\[V \setminus\{\langle a_1, a_2 \rangle, \langle a_1, a_3\rangle, \langle a_1, a_4\rangle, \langle a_2, a_3\rangle, \langle a_2, a_4\rangle, \langle a_3, a_4\rangle,\langle a_1+a_2, a_3+a_4\rangle\}.\]

\subsection{Hamming graphs}\label{sec:hamminggraphs}

In this section we show the exact zero forcing number of Hamming graphs, extending the results on~$H(n,2)$ and~$H(2,q)$ in~\cite{AIM} and~\cite{A}, respectively. 

We will use the following elementary property, which can be proved using Newton's binomial theorem.

\begin{lemma}
    For any integers~$n\ge 1$,~$q\ge 2$, 
    \[\sum_{\substack{k\in\{0,1,\dots,n\} \\ k\text{ even}}}\binom{n}{2k}(q-1)^{n-2k} = \frac{1}{2}\left(q^n+(q-2)^{n}\right).\]
    \label{lem:evenTerms}
\end{lemma}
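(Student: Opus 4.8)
The plan is to expand both $q^n$ and $(q-2)^n$ with Newton's binomial theorem, add them, and observe that the odd-power terms cancel while the even-power terms double. Concretely, write $q = (q-1)+1$ so that
\[q^n = ((q-1)+1)^n = \sum_{j=0}^{n}\binom{n}{j}(q-1)^{n-j},\]
and similarly $q-2 = (q-1)-1$, so that
\[(q-2)^n = ((q-1)-1)^n = \sum_{j=0}^{n}\binom{n}{j}(-1)^{j}(q-1)^{n-j}.\]
Adding these two identities, every term with $j$ odd contributes $1 + (-1)^j = 0$ and every term with $j$ even contributes $1 + (-1)^j = 2$, so
\[q^n + (q-2)^n = 2\sum_{\substack{j\in\{0,1,\dots,n\}\\ j\text{ even}}}\binom{n}{j}(q-1)^{n-j}.\]

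Next I would reindex the surviving sum. Writing $j = 2k$ as $k$ ranges over $\{0,1,\dots,\lfloor n/2\rfloor\}$ gives
\[q^n + (q-2)^n = 2\sum_{k=0}^{\lfloor n/2\rfloor}\binom{n}{2k}(q-1)^{n-2k},\]
and dividing by $2$ yields exactly the claimed formula, since the binomial coefficient $\binom{n}{2k}$ vanishes for $2k > n$ so the upper limit of summation is immaterial. One small bookkeeping point: the statement writes the summation index as "$k$ even" with $k\in\{0,1,\dots,n\}$ and the summand as $\binom{n}{2k}(q-1)^{n-2k}$, so I would match that convention by substituting $j=2k$ only after first pulling out even $j$; alternatively I can note that the two ways of writing the index set produce the same collection of terms and just quote the reindexed version. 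I will keep the write-up aligned with whichever indexing the paper actually uses downstream.

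There is essentially no obstacle here — the only thing to be careful about is the sign bookkeeping in the $(q-2)^n = ((q-1)-1)^n$ expansion and making sure the parity cancellation is stated for the exponent $j$ (equivalently $n-j$, since $n-j$ and $j$ have the same parity iff $n$ is even, but here we are summing over $j$ directly so it is cleanest to track the $(-1)^j$ factor). I would also remark that the identity holds for all $q\ge 2$ including $q=2$, where the right-hand side degenerates to $\tfrac12 q^n = 2^{n-1}$, matching the left-hand side since $(q-2)^n = 0$; and it is valid even without the hypothesis $q\ge 2$ as a polynomial identity in $q$, though only the stated range is needed.
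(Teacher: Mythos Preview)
Your proof is correct and matches the paper's approach exactly: the paper does not spell out a proof but simply states that the identity ``can be proved using Newton's binomial theorem,'' which is precisely the expansion-and-cancellation argument you give. Your remark about the indexing is apt---the paper's double use of ``$k$ even'' together with the summand $\binom{n}{2k}$ is a notational slip, and the intended sum is the one you actually prove.
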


\begin{theorem} \label{th:hamupper}
For $n\ge 1,\ q \geq 2$, $Z(H(n,q)) = \frac{1}{2}(q^n+(q-2)^n)$.
\end{theorem}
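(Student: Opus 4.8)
The target formula $Z(H(n,q)) = \frac{1}{2}(q^n + (q-2)^n)$ must be established by matching an upper bound (a concrete zero forcing set) with a lower bound (via maximum nullity, Lemma~\ref{lem:maxnul}, or via the Grundy domination bound, Lemma~\ref{lem:grundy}). The plan is to prove both bounds separately and invoke Lemma~\ref{lem:evenTerms} to rewrite the combinatorial sum in closed form.

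\textbf{Upper bound via an explicit zero forcing set.} First I would exhibit a zero forcing set of size $\frac12(q^n+(q-2)^n)$. The natural candidate, suggested by Lemma~\ref{lem:evenTerms}, is the set $\Wa$ of tuples whose number of nonzero coordinates is even (taking the "white" set to be the complement) — more precisely, fix the symbol $0$ as a distinguished value in $\{0,\dots,q-1\}$, let $\mathrm{wt}(v)$ be the number of coordinates of $v$ that are nonzero, and set $\Ba = \{v : \mathrm{wt}(v)\text{ is odd}\}$ as the leader set (of size $\frac12(q^n-(q-2)^n)$) while forcing $\Wa = \{v : \mathrm{wt}(v)\text{ even}\}$; or the reverse parity, whichever gives the claimed cardinality (the even-weight count is $\frac12(q^n+(q-2)^n)$ by Lemma~\ref{lem:evenTerms}). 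I would then argue that this is a zero forcing set by inducting on weight: a black vertex $w$ of odd weight $2j+1$ has exactly $n-(2j+1)$ neighbors of weight $2j+2$ (change a zero coordinate to one of the $q-1$ nonzero values — wait, that gives $(q-1)(n-2j-1)$ such neighbors) and $2j+1$ neighbors of weight $2j$, plus $(q-2)(2j+1)$ neighbors of the same weight $2j+1$; the issue is that a single black vertex rarely has a \emph{unique} white neighbor. So instead I would use a sequential/recursive forcing argument exploiting the product structure $H(n,q) = H(n-1,q)\,\square\,K_q$: forcing propagates column-by-column, and one reduces to the base cases $H(n,2)$ and $H(2,q)$ treated in~\cite{AIM,A}. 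Concretely, assume inductively that $H(n-1,q)$ has a zero forcing set $\Ba'$ of the claimed size with complement $\Wa'$; then build a leader set for $H(n,q)$ by placing appropriate copies of $\Ba'$ (and full layers of $K_q$) across the $q$ layers so that each layer can be forced from a previously-forced layer together with its own leader set, and verify by Lemma~\ref{lem:evenTerms} that the total count telescopes to $\frac12(q^n+(q-2)^n)$.

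\textbf{Lower bound.} For the matching lower bound I would use maximum nullity over some field $\mathbb F$ (Lemma~\ref{lem:maxnul}): exhibit a matrix $M\in\Sa^{\mathbb F}(H(n,q))$ with nullity at least $\frac12(q^n+(q-2)^n)$. The adjacency algebra of $H(n,q)$ is generated by tensor/Cartesian-product structure, so a natural choice is $M = \bigotimes$-type combination built from a matrix $A_q\in\Sa^{\mathbb F}(K_q)$: since $H(n,q)=K_q\,\square\cdots\square\,K_q$, the matrix $M = \sum_{i=1}^n I\otimes\cdots\otimes A_q\otimes\cdots\otimes I$ (with $A_q$ in the $i$-th slot, and the diagonal adjusted) lies in $\Sa^{\mathbb F}(H(n,q))$, and its eigenvalues are sums $\lambda_{j_1}+\cdots+\lambda_{j_n}$ of eigenvalues of $A_q$. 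Choosing $A_q$ with spectrum arranged so that a large proportion of these sums vanish — e.g. $A_q$ having eigenvalues $\{1,-1,0,\dots,0\}$ or similar, over a field where $q-2$ can be interpreted appropriately — one counts the multiplicity of $0$ among the $q^n$ sums; this multiplicity is exactly a sum of the form $\sum_{k \text{ even}}\binom{n}{2k}(q-1)^{n-2k} = \frac12(q^n+(q-2)^n)$ by Lemma~\ref{lem:evenTerms}, matching the upper bound. Together with Lemma~\ref{lem:maxnul} this gives $Z(H(n,q))\ge \frac12(q^n+(q-2)^n)$, and combined with the upper bound, equality.

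\textbf{Main obstacle.} I expect the delicate part to be the upper bound: verifying that the proposed leader set actually forces the whole graph requires carefully ordering the color-change steps so that at each step the pivot has a \emph{unique} white neighbor, and the Cartesian-product/layer induction needs the inductive hypothesis to carry along not just the size but the precise combinatorial shape of the zero forcing set (so that layers interlock correctly). Getting Lemma~\ref{lem:evenTerms} to account exactly for the telescoping count across the $q$ layers, and handling the parity bookkeeping when $q=2$ versus $q\ge 3$ (where $(q-2)^n$ changes sign behavior or vanishes), is where the real care is needed; the lower bound via a tensor-sum matrix and eigenvalue counting should be comparatively mechanical once the right $A_q$ is chosen.
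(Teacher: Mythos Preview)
Your overall architecture matches the paper's: an explicit zero forcing set for the upper bound, built recursively via $H(n,q)=H(n-1,q)\,\square\,K_q$, and maximum nullity of the tensor-sum matrix $B_n=\sum_i I\otimes\cdots\otimes A_q\otimes\cdots\otimes I$ for the lower bound. For the upper bound your instinct that the induction hypothesis must be strengthened is exactly right; the paper's specific strengthening is to require that the ``core'' $C_{n,q}=\{1,\dots,q-2\}^n$ lie in the leader set but never act as a pivot. With that, one takes $q-1$ identical copies of the $(n-1)$-dimensional leader set and one copy with its core removed, giving size $q\,z_{n-1,q}-(q-2)^{n-1}=z_{n,q}$; the core condition is precisely what guarantees that forcing each copy does not interfere with the others.

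The genuine gap is in your lower bound. The eigenvalue pattern $\{1,-1,0,\dots,0\}$ you propose does \emph{not} give multiplicity $\tfrac12(q^n+(q-2)^n)$: the number of $n$-tuples of such eigenvalues summing to zero is $\sum_{j}\binom{n}{2j}\binom{2j}{j}(q-2)^{n-2j}$, which already fails for $n=1$ (it yields $q-2$, not $q-1$). The pattern that actually matches Lemma~\ref{lem:evenTerms} is $\{1,0,\dots,0\}$ with $q-1$ zeros, and crucially this only works over $\mathbb{F}_2$, where ``sum $=0$'' means ``an even number of $1$'s'' and the count becomes $\sum_{k\text{ even}}\binom{n}{k}(q-1)^{n-k}$; over $\mathbb{R}$ the same pattern gives only $(q-1)^n\le z_{n,q}$. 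The paper therefore takes $A_q=J$ over $\mathbb{F}_2$, which has exactly this spectrum when $q$ is odd. When $q$ is even, however, $J\j=0$ and $J^2=qJ=0$ over $\mathbb{F}_2$, so $J$ is nilpotent and no tensor-of-eigenvectors argument is available at all; the paper handles this case by a separate induction on $n$, building null vectors of $B_n$ of the form $\j\otimes\w+\e_1\otimes(B_{n-1}\w)$ using $B_{n-1}^2=0$. So the lower bound is not the mechanical half: the choice of $\mathbb{F}_2$ is the crux, and the even-$q$ case needs an entirely different construction.
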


\begin{proof} Define \(z_{n,q}\coloneqq\frac{1}{2}\left(q^n+(q-2)^n\right)\). First, we show that~$Z(H(n,q)) \le z_{n,q}$.
Define the \textit{core} of~$H(n,q)$ as the set~$C_{n,q} \coloneqq \{ (a_1,a_2,\dots,a_n) \mid a_i\in \{1,2,\dots,q-2\} \}$. We prove by induction on \(n\) that there exists a zero forcing set of size \(z_{n,q}\) that contains \(C_{n,q}\), but where no vertex of \(C_{n,q}\) acts as a pivot. 

 For~$n=1$, the set~$C_{1,q}\cup \{(0)\}$ is a zero forcing set for~$H(1,q)$ of size~$q-1$, and white vertex~$(q-1)$ can be forced with pivot~$(0)\notin C_{1,q}$. 

Suppose that we have a zero forcing set of~$H(n-1,q)$ of size \(z_{n-1,q}\) in which no vertex of~$C_{n-1,q}$ acts as a pivot, but all are in the leader set. As~$H(n,q) = H(n-1,q)\square K_q$, the Hamming graph \(H(n,q)\) can be constructed by taking~$q$ copies of~$H(n-1,q)$ and connecting the corresponding vertices in each copy with an edge. The tuple corresponding to each vertex is then extended with a new entry denoting the copy it is in. In the first~$q-1$ copies, select the same zero forcing set of~$H(n-1,q)$ respecting the above conditions. In the last copy, choose the same set, excluding the core. The first~$q-1$ copies can be forced black by applying the zero forcing process of~$H(n-1,q)$ to each. Note that, by the induction hypothesis, the pivots that we use here are black in every copy, so they still have only one white neighbor. Now we can force the core of the last copy, using the corresponding vertices in the first copy as pivots. Finally, the zero forcing process of~$H(n-1,q)$ can be repeated for the last copy. Note that the core of~$H(n,q)$ is included in the leader set, while none of its vertices were used as a pivot.
\begin{figure}[H]
    \centering
    \vspace{3mm}
\begin{subfigure}[H]{0.28\textwidth}
    \centering
    \vspace{2mm}
    \begin{adjustbox}{width=0.99\textwidth}
    \tdplotsetmaincoords{60}{125}
    \begin{tikzpicture}
		[tdplot_main_coords,
			cube/.style={very thick,black},
			cube2/.style={very thick,dashed},
			cube3/.style={draw=none,fill=inner2}]
			
	\draw[cube3] (0,0,2) -- (0.5,0,2) -- (0.5,2,2) -- (0,2,2) -- cycle;
	\draw[cube3] (0.5,0,1.5) -- (1.5,0,1.5) -- (1.5,1.5,1.5) -- (0.5,1.5,1.5) -- cycle;
	\draw[cube3] (1.5,0,1.5) -- (1.5,1.5,1.5) -- (1.5,1.5,0.5) -- (1.5,2,0.5) -- (1.5,2,0) -- (1.5,0,0) -- cycle;
	\draw[cube3] (0,2,2) -- (0,2,0) -- (1.5,2,0) -- (1.5,2,0.5) -- (0.5,2,0.5) -- (0.5,2,2) -- cycle;
	\draw[cube3] (0.5,0,2) -- (0.5,2,2) -- (0.5,2,0.5) -- (0.5,1.5,0.5) -- (0.5,1.5,1.5) -- (0.5,0,1.5) -- cycle;
	\draw[cube3] (1.5,1.5,0.5) -- (1.5,2,0.5) -- (0.5,2,0.5) -- (0.5,1.5,0.5) --cycle;
	\draw[cube3] (0.5,1.5,1.5) -- (0.5,1.5,0.5) -- (1.5,1.5,0.5) -- (1.5,1.5,1.5) -- cycle;
	
	\draw[cube] (0.5,1.5,2) -- (0.5,0,2) -- (0.5,0,1.5) -- (0.5,1.5,1.5);
	\draw[cube] (0.5,0,2) -- (0,0,2) -- (0,2,2) -- (0.5,2,2) -- (0.5,2,0.5) -- (1.5,2,0.5) -- (1.5,2,0) -- (0,2,0) -- (0,2,2);
	\draw[cube] (0.5,0,1.5) -- (1.5,0,1.5) -- (1.5,1.5,1.5) -- (0.5,1.5,1.5);
	\draw[cube] (1.5,2,0) -- (1.5,0,0) -- (1.5,0,1.5);
    \draw[cube] (0.5,2,2) -- (0.5,1.5,2);
    \draw[cube] (0.5,1.5,1.5) -- (0.5,1.5,0.5) -- (1.5,1.5,0.5) -- (1.5,2,0.5);
    \draw[cube] (1.5,1.5,0.5) -- (1.5,1.5,1.5);
    \draw[cube] (0.5,1.5,0.5) -- (0.5,2,0.5);

	\draw[cube2] (0,0,0) -- (0,2,0) -- (2,2,0) -- (2,0,0) -- cycle;
	\draw[cube2] (0,0,2) -- (0,2,2) -- (2,2,2) -- (2,0,2) -- cycle;
	
	\draw[cube2] (0,0,0) -- (0,0,2);
	\draw[cube2] (0,2,0) -- (0,2,2);
	\draw[cube2] (2,0,0) -- (2,0,2);
	\draw[cube2] (2,2,0) -- (2,2,2);
	
	\node[draw=none,fill=none] at (2.8,0,0){$\small(0,4,0)$};
	\node[draw=none,fill=none] at (0,2.8,0){$\small(0,0,4)$};
	\node[draw=none,fill=none] at (0,0,2.3){$\small(4,0,0)$};
	
	\node[draw=none,fill=none] at (3.7,0,0){};
	\node[draw=none,fill=none] at (0,3.7,0){};
	
\end{tikzpicture}
    \end{adjustbox}
    \vspace{1mm}
\end{subfigure}
\hspace{-8mm}
\begin{subfigure}[H]{0.28\textwidth}
    \centering
    \begin{adjustbox}{width=0.99\textwidth}
    \tdplotsetmaincoords{60}{125}
    \begin{tikzpicture}
		[tdplot_main_coords,
			cube/.style={very thick,black},
			cube2/.style={very thick,dashed},
			cube3/.style={draw=none,fill=inner2},
			cube4/.style={draw=none,fill=black,pattern=north east lines}]
			
	\draw[cube3] (0,0,2) -- (0,2,2) -- (0.5,2,2) -- (0.5,0,2) -- cycle;
	\draw[cube3] (0,2,2) -- (0,2,0) -- (1.5,2,0) -- (1.5,2,0.5) -- (0.5,2,0.5) -- (0.5,2,2) -- cycle;
	\draw[cube3] (0.5,1.5,1.5) -- (1.5,1.5,1.5) -- (1.5,1.5,0.5) -- (0.5,1.5,0.5) -- cycle;
	\draw[cube3] (0.5,1.5,2) -- (0.5,2,2) -- (0.5,2,0.5) -- (0.5,1.5,0.5) -- cycle;
	\draw[cube3] (0.5,1.5,0.5) -- (1.5,1.5,0.5) -- (1.5,2,0.5) -- (0.5,2,0.5) -- cycle;
	\draw[cube3] (1.5,1.5,0) -- (1.5,2,0) -- (1.5,2,0.5) -- (1.5,1.5,0.5) -- cycle;
	\draw[cube4] (0.5,0,2) -- (2,0,2) -- (2,1.5,2) -- (0.5,1.5,2) -- cycle;
	\draw[cube4] (2,0,2) -- (2,0,0) -- (2,1.5,0) -- (2,1.5,2) -- cycle;
	\draw[cube4] (0.5,1.5,2) -- (0.5,1.5,1.5) -- (1.5,1.5,1.5) -- (1.5,1.5,0) -- (2,1.5,0) -- (2,1.5,2) -- cycle;

    \draw[cube] (0.5,0,2) -- (0,0,2) -- (0,2,2) -- (0.5,2,2) -- (0.5,1.5,2);
    \draw[cube] (0,2,2) -- (0,2,0) -- (1.5,2,0) -- (1.5,2,0.5) -- (0.5,2,0.5) -- (0.5,2,2);
    \draw[cube] (0.5,2,0.5) -- (0.5,1.5,0.5) -- (1.5,1.5,0.5) -- (1.5,2,0.5);
    \draw[cube] (0.5,1.5,2) -- (0.5,1.5,1.5) -- (1.5,1.5,1.5) -- (1.5,1.5,0) -- (1.5,2,0);
    \draw[cube] (0.5,1.5,1.5) -- (0.5,1.5,0.5);
    \draw[cube] (2,1.5,0) -- (1.5,1.5,0);
    \draw[cube] (0.5,0,2) -- (2,0,2) -- (2,1.5,2) -- (0.5,1.5,2) -- cycle;
    \draw[cube] (2,0,2) -- (2,0,0) -- (2,1.5,0) -- (2,1.5,2);
	
	\draw[cube2] (0,0,0) -- (0,2,0) -- (2,2,0) -- (2,0,0) -- cycle;
	\draw[cube2] (0,0,2) -- (0,2,2) -- (2,2,2) -- (2,0,2) -- cycle;
	
	\draw[cube2] (0,0,0) -- (0,0,2);
	\draw[cube2] (0,2,0) -- (0,2,2);
	\draw[cube2] (2,0,0) -- (2,0,2);
	\draw[cube2] (2,2,0) -- (2,2,2);
	
	\node[draw=none,fill=none] at (3.7,0,0){};
	\node[draw=none,fill=none] at (0,3.7,0){};
	
\end{tikzpicture}
    \end{adjustbox}
\end{subfigure}
\hspace{-10mm}
\begin{subfigure}[H]{0.28\textwidth}
    \centering
    \begin{adjustbox}{width=0.99\textwidth}
    \tdplotsetmaincoords{60}{125}
    \begin{tikzpicture}
		[tdplot_main_coords,
			cube/.style={very thick,black},
			cube2/.style={very thick,dashed},
			cube3/.style={draw=none,fill=inner2},
			cube4/.style={draw=none,fill=black,pattern=north east lines}]

	\draw[cube3] (0,2,2) -- (0,2,0) -- (1.5,2,0) -- (1.5,2,0.5) -- (0.5,2,0.5) -- (0.5,2,2) -- cycle;
	\draw[cube3] (1.5,1.5,0) -- (1.5,2,0) -- (1.5,2,0.5) -- (1.5,1.5,0.5) -- cycle;
	\draw[cube3] (0,0,2) -- (2,0,2) -- (2,1.5,2) -- (0.5,1.5,2) -- (0.5,2,2) -- (0,2,2) -- cycle;
	\draw[cube3] (2,0,2) -- (2,0,0) -- (2,1.5,0) -- (2,1.5,2) -- cycle;
	\draw[cube3] (0.5,1.5,2) -- (0.5,1.5,1.5) -- (1.5,1.5,1.5) -- (1.5,1.5,0) -- (2,1.5,0) -- (2,1.5,2) -- cycle;
	\draw[cube3] (0.5,1.5,2) -- (0.5,2,2) -- (0.5,2,1.5) -- (0.5,1.5,1.5) -- cycle;
	\draw[cube4] (1.5,1.5,1.5) -- (1.5,2,1.5) -- (1.5,2,0.5) -- (1.5,1.5,0.5) -- cycle;
	\draw[cube4] (1.5,1.5,1.5) -- (0.5,1.5,1.5) -- (0.5,2,1.5) -- (1.5,2,1.5) -- cycle;
	\draw[cube4] (1.5,2,1.5) -- (0.5,2,1.5) -- (0.5,2,0.5) -- (1.5,2,0.5) -- cycle;

    \draw[cube] (0,2,2) -- (0,2,0) -- (1.5,2,0) -- (1.5,2,0.5) -- (0.5,2,0.5) -- (0.5,2,2);
    \draw[cube] (2,0,2) -- (2,0,0) -- (2,1.5,0) -- (2,1.5,2);
    \draw[cube] (0,0,2) -- (2,0,2) -- (2,1.5,2) -- (0.5,1.5,2) -- (0.5,2,2) -- (0,2,2) -- cycle;
    \draw[cube] (2,1.5,0) -- (1.5,1.5,0) -- (1.5,2,0);
    \draw[cube] (1.5,1.5,0) -- (1.5,1.5,1.5) -- (0.5,1.5,1.5) -- (0.5,2,1.5);
    \draw[cube] (1.5,1.5,0.5) -- (1.5,2,0.5) -- (1.5,2,1.5) -- (0.5,2,1.5);
    \draw[cube] (1.5,1.5,1.5) -- (1.5,2,1.5);
    \draw[cube] (0.5,1.5,2) -- (0.5,1.5,1.5);
	
	\draw[cube2] (0,0,0) -- (0,2,0) -- (2,2,0) -- (2,0,0) -- cycle;
	\draw[cube2] (0,0,2) -- (0,2,2) -- (2,2,2) -- (2,0,2) -- cycle;
	
	\draw[cube2] (0,0,0) -- (0,0,2);
	\draw[cube2] (0,2,0) -- (0,2,2);
	\draw[cube2] (2,0,0) -- (2,0,2);
	\draw[cube2] (2,2,0) -- (2,2,2);
	
	\node[draw=none,fill=none] at (3.7,0,0){};
	\node[draw=none,fill=none] at (0,3.7,0){};
	
\end{tikzpicture}
    \end{adjustbox}
\end{subfigure}
\hspace{-10mm}
\begin{subfigure}[H]{0.28\textwidth}
    \centering
\begin{adjustbox}{width=0.99\textwidth}
    \tdplotsetmaincoords{60}{125}
    \begin{tikzpicture}
		[tdplot_main_coords,
			cube/.style={very thick,black},
			cube2/.style={very thick,dashed},
			cube3/.style={draw=none,fill=inner2},
			cube4/.style={draw=none,fill=black,pattern=north east lines}]

	\draw[cube3] (0,0,2) -- (2,0,2) -- (2,1.5,2) -- (0.5,1.5,2) -- (0.5,2,2) -- (0,2,2) -- cycle;
	\draw[cube3] (2,0,2) -- (2,0,0) -- (2,1.5,0) -- (2,1.5,2) -- cycle;
	\draw[cube3] (0,2,2) -- (0,2,0) -- (1.5,2,0) -- (1.5,2,1.5) -- (0.5,2,1.5) -- (0.5,2,2) -- cycle;
	\draw[cube4] (2,1.5,2) -- (2,2,2) -- (2,2,0) -- (2,1.5,0) -- cycle;
	\draw[cube4] (2,2,2) -- (0.5,2,2) -- (0.5,1.5,2) -- (2,1.5,2) -- cycle;
	\draw[cube4] (0.5,2,2) -- (0.5,2,1.5) -- (1.5,2,1.5) -- (1.5,2,0) -- (2,2,0) -- (2,2,2) -- cycle;
	
	\draw[cube] (0,0,2) -- (0,2,2) -- (2,2,2) -- (2,0,2) --  cycle;
	\draw[cube] (0,2,2) -- (0,2,0) -- (2,2,0) -- (2,0,0) -- (2,0,2);
	\draw[cube] (2,2,0) -- (2,2,2);
	\draw[cube] (0.5,1.5,2) -- (0.5,2,2) -- (0.5,2,1.5) -- (1.5,2,1.5) -- (1.5,2,0);
	\draw[cube] (0.5,1.5,2) -- (2,1.5,2) -- (2,1.5,0);

	\draw[cube2] (0,0,0) -- (0,2,0) -- (2,2,0) -- (2,0,0) -- cycle;
	\draw[cube2] (0,0,2) -- (0,2,2) -- (2,2,2) -- (2,0,2) -- cycle;
	
	\draw[cube2] (0,0,0) -- (0,0,2);
	\draw[cube2] (0,2,0) -- (0,2,2);
	\draw[cube2] (2,0,0) -- (2,0,2);
	\draw[cube2] (2,2,0) -- (2,2,2);
	
	\node[draw=none,fill=none] at (3.7,0,0){};
	\node[draw=none,fill=none] at (0,3.7,0){};
	
\end{tikzpicture}
    \end{adjustbox}
\end{subfigure}
\hspace{-10mm}
    \caption{The zero forcing process for~$H(3,4)$. The Hamming graph is depicted as a cube, with each unit subcube representing a vertex. Vertices are adjacent whenever their cubes line up along one of the three main axes.}
    \label{fig:HammingGraphUpper}
\end{figure}
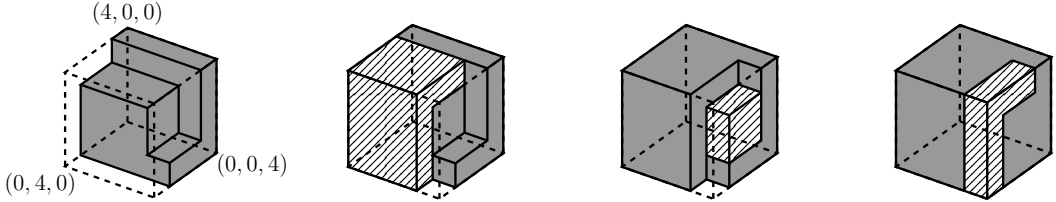
The constructed zero forcing set has size \[qz_{n-1,q}-(q-2)^{n-1}=z_{n,q}\] and therefore completes the induction. We conclude that \(Z(H(n,q))\leq z_{n,q}\).

To show that this bound is tight, we now prove that~$Z(H(n,q))\ge z_{n,q}$. Define the matrix \(B_n\) recursively as
    \[\left\{\begin{matrix*}[l]B_1=J\\B_n=J\otimes I^{\otimes(n-1)}+I\otimes B_{n-1}\end{matrix*}\right.\]
    or, explicitly, \[B_n=\underbrace{J\otimes I\otimes\cdots\otimes I}_n+\underbrace{I\otimes J\otimes \cdots\otimes I}_n+\cdots+\underbrace{I\otimes\cdots\otimes I\otimes J}_n\] where \(I\) and \(J\) are the \(q\times q\) unit matrix and the \(q\times q\) all-one matrix respectively, and~$\otimes$ denotes the tensor product. Then \(B_n\) is in~$\Sa(H(n,q))$.
    We show that its nullity as a matrix over \(\mathbb{F}_2\) is at least \(z_{n,q}\) by constructing a set of \(z_{n,q}\) independent eigenvectors with eigenvalue zero. The result then follows from Lemma~\ref{lem:maxnul}. Let \(\e_i\) be the \(i\)th standard basis vector of \(\mathbb{F}_2^q\) and let \(\j\) denote the all-one vector of \(\mathbb{F}_2^q\). This vector is an eigenvector of \(J\) with eigenvalue one if \(q\) is odd and eigenvalue zero if \(q\) is even. Let \(\{\x_1,\x_2,\dots,\x_{q-1}\}\) be a set of independent eigenvectors of \(J\) with eigenvalue zero.

    \begin{description}
    \item[Case 1: \(q\) is odd.] Consider all elements of \(\{\j,\x_1,\x_2,\dots,\x_{q-1}\}^{\otimes n}\) that contain an even number of \(\j\)'s. In other words, all tensors of the form \[\underbrace{\j\otimes\j\otimes\cdots\otimes\j}_k\;\otimes\;\x_{i_1}\otimes \x_{i_2}\otimes\cdots\otimes \x_{i_{n-k}}\] with \(k\) even, and permutations thereof. These are all elements of the kernel of \(B_n\) and, by Lemma~\ref{lem:evenTerms}, there are \(z_{n,q}\) such vectors. Moreover, they are all linearly independent, because \(\x_1,\x_2,\dots,\x_{q-1}\) and \(\j\) are linearly independent.

    \item[Case 2: \(q\) is even.]
    
    We construct \(z_{n,q}\) eigenvectors with eigenvalue zero by induction on \(n\). If \(n=1\), then we can choose the \(z_{1,q}=q-1\) eigenvectors \(\x_1,\x_2,\dots,\x_{q-1}\) of \(J\). For the induction step, suppose that \(X\) is a set of \(z_{n-1,q}\) linearly independent vectors that nullify \(B_{n-1}\).

    Since \(J\j=0\), we can choose the set \(\{\x_1,\x_2,\dots,\x_{q-1}\}\) in such a way that \(\x_1=\j\). Consider the \((q-2)z_{n-1,q}\) vectors of the form
    \[\x_{i}\otimes\mathbf{v}\]
    with \(i\in\{2,3,\dots,q-1\}\) and \(\mathbf{v}\in X\). They are eigenvectors with eigenvalue zero.

    Consider also the \(q^{n-1}\) vectors of the form
    \[\j\otimes\w+\e_1\otimes(B_{n-1}\w)\]
    with \(\w\in\{\e_1,\e_2,\dots,\e_q\}^{\otimes(n-1)}\). They are nullified by \(B_n\) since
    {\small{
    \begin{align*}
        B_n(\j\otimes\w+\e_i\otimes(B_{n-1}\w))&=(J\otimes I+I\otimes B_{n-1})(\j\otimes\w+\e_i\otimes(B_{n-1}\w))\\
        &=2\cdot\j\otimes(B_{n-1}\w)+\e_i\otimes(B_{n-1}^2\w)\\
        &=\mathbf{0},
    \end{align*}
    }}
    
    where we used that \(B_{n-1}^2=0\) since \(J^2=qJ=0\).
    Moreover, the vectors \(\x_{i}\otimes\mathbf{v}\) and \(\j\otimes\w+\e_1\otimes(B_{n-1}\w)\) are all linearly independent because the \(\x_i\), \(\j\) and \(\e_1\) are linearly independent, and all \(\mathbf{v}\) are linearly independent by the induction hypothesis. In total, we obtain
    \[(q-2)z_{n-1,q}+q^{n-1}=\frac{1}{2}(q^n+(q-2)^n)=z_{n,q}\]
    linearly independent eigenvectors with eigenvalue zero.
 \qedhere   \end{description}
\end{proof}

\subsection*{Acknowledgements} 
Aida Abiad is supported by the Dutch Research Council through the grant VI.Vidi.213.085. Robin Simoens is supported by the Research Foundation Flanders through the grant 11PG724N. The authors thank  Jozefien D'haeseleer, Cor Hurkens and Nick Reniers for inspiring discussions in the early stage of this work.

\bibliographystyle{abbrv}

\DeclareRobustCommand{\VAN}[3]{#3}

\end{document}